\def\figurename{Figure} 
\renewcommand{\fnum@figure}[1]{\figurename~\thefigure.}
\def\tablename{Table} 
\renewcommand{\fnum@table}[1]{\tablename~\thetable.}
\newtheorem{theorem}{Theorem}[section]
\newtheorem{lemma}[theorem]{Lemma}
\newtheorem{corollary}[theorem]{Corollary}
\newtheorem{proposition}[theorem]{Proposition}
\theoremstyle{definition}
\newtheorem{definition}[theorem]{Definition}
\theoremstyle{remark}
\newtheorem{remark}[theorem]{Remark}
\numberwithin{equation}{section}
\def\P{\mathbb P}
\def\R{\mathbb R}
\def\E{\mathbb E}
\def\E{\mathbb E}
\def\N{\mathbb N}
\begin{document}
\title{ $L^{p}$-solutions of backward doubly stochastic differential
equations\thanks{The work is supported by TWAS Research Grants to individuals (No. 09-100 RG/MATHS/AF/AC-I-
UNESCO FR: 3240230311)} }
\author{Auguste Aman\thanks{augusteaman5@yahoo.fr, auguste.aman@univ.cocody.ci}\\
{\it U.F.R Mathématiques et informatique, Universit\'{e} de Cocody},\\ {\it 582 Abidjan 22, C\^{o}te d'Ivoire}
}

\date{}
\maketitle

\begin{abstract}
The goal of this paper is to solve backward doubly
stochastic differential equations (BDSDEs, in short) under weak assumptions on the data. The first part is devoted to the
development of some new technical aspects of stochastic calculus related to this BDSDEs. Then we
derive a priori estimates and prove existence and uniqueness of solution in $L^{p}$, $p\in(1,2)$, extending the work of Pardoux and Peng (see Probab. Theory Related Fields 98 (1994), no. 2).
\end{abstract}

\textbf{MSC 2000:} 60H05,\, 60H20.\\
\textbf{Key Words}: Backward doubly stochastic differential equations; monotone generator; $p$-integrable data.
\section{Introduction}
In this paper, we are concerned with backward doubly stochastic differential equations
(BDSDEs, in short):
\begin{eqnarray*}
Y_t &=& \xi +\int^{ T}_{t}f(r, Y_r, Z_r) dr+\int_{t}^{T}g(r,Y_r,Z_r)\overleftarrow{dB}_r-\int^{T}_{t}Z_r dW_r ,\;  0\leq t\leq T,\label{a0}
\end{eqnarray*}
which involves both a standard forward stochastic Itô integral driven by $dW_t$ and
a backward stochastic Kunita-Itô integral driven by $\overleftarrow{dB}_t$ (see, Kunita \cite{Kunita}). The random variable $\xi$ and functions $f$ and $g$ are data, while the pair of
 processes $(Y_t,Z_t)_{t\in[0,T]}$ is the unknown.

The theory of nonlinear backward doubly SDEs have been firstly introduced in \cite{PP1} by Pardoux and Peng . They show that, under Lipschitz conditions on $f$ and $g$, the above "backward doubly SDE" has a unique solution. Next, in the Markovian framework, BDSDEs is coupled with the forward SDE as follows: for each $(t,x)\in[0,T]\times\R^d$
\begin{eqnarray}
X^{t,x}_s&=&x+\int_t^s b(X^{t,x}_r)dr+\int_t^s \sigma(X^{t,x}_r)dW_r\label{a0'}\\
Y^{t,x}_s &=& l(X^{t,x}_T) +\int^{ T}_{s}f(r,X^{t,x}_r, Y^{t,x}_r, Z^{t,x}_r) dr+\int_{s}^{T}g(r,X^{t,x}_r,Y^{t,x}_r,Z^{t,x}_r)\overleftarrow{dB}_r-\int^{T}_{t}Z^{t,x}_r dW_r,\;\; t\leq s\leq T.\nonumber
\end{eqnarray}
Let $\{(X^{t,x}_s,Y^{t,x}_s,Z^{t,x}_s);\;\; t\leq s\leq T\}$ be the solution of \eqref{a0'}. Under stronger conditions on the coefficients ($b,\,\sigma,\, l,\, f,\, g$ are $C^3$) they proved that
\begin{eqnarray}
u(t,x)=Y^{t,x}_t\label{a0"}
\end{eqnarray}
is the unique classical solution of the quasi-linear stochastic partial differential equations (SPDEs)
\begin{eqnarray*}
SPDE {(f,g)}\left\{
\begin{array}{l}
(i)\,\,\displaystyle{du(t,x)+[
Lu(t,x)+f(t,x,u(t,x),\sigma^{*}(x)D_{x}u(t,x))]dt}\\\displaystyle{\;\;\;\;\;\;\;\;\;\;\;\
\;\;\;\;\;\;\;\;+g(t,x,u(t,x),\sigma^{*}(x)D_{x}u(t,x))\overleftarrow{dB}_{s}=0,\,\,\
(t,x)\in[0,T]\times\R^d}, \\\\
(ii)\,\,u(T,x)=l(x),\,\,\,\,\,\,\ x\in\R^d,
\end{array}\right.
\end{eqnarray*}
where $L$, the operator infinitesimal of the diffusion $X$, is defined by
\begin{eqnarray*}
L=\frac{1}{2}\sum_{i,j=1}^{d}(\sigma(x)\sigma^{*}(x))_{i,j}
\frac{\partial^{2}}{\partial x_{i}\partial
x_{j}}+\sum^{d}_{i=1}b_{i}(x)\frac{\partial}{\partial x_{i}},\quad
\forall\, x\in\R^d.
\end{eqnarray*}
The relation \eqref{a0"} generalizes the well-know Feymann-Kac formula to SPDEs.

After the first existence and uniqueness result established in \cite{PP1}, many other works have been
devoted to
existence and/or uniqueness results for BDSDEs under weaker assumptions on the coefficient $f$. For scalar BDSDEs case, N'zi and Owo \cite{NO} deal with discontinuous coefficients
by using the comparison theorem establish in \cite{Shial}. There is  no comparison theorem for multidimensional BDSDEs. To overcome this difficulty, a monotonicity assumption on the generator $f$ with respect $y$ uniformly on $z$ is used. This condition appears in Peng and Shi \cite{PengShi}, N'zi and Owo \cite{NO1} and Aman and Owo \cite{AO}.

However, in all above works the data are supposed to be at least square integrable. This condition is too restrictive to be assumed in many applications.
For example, American claim pricing is equivalent to solve the following linear BDSDE
\begin{eqnarray}
-dY_t=(r_tY_t+\theta_t Z_t)dt+c_tY_t\overleftarrow{dB}_t-Z_tdW_t,\;\; Y_T=\xi,\label{linear}
\end{eqnarray}
where $r_t$ is the interest rate, $\theta_t$ is the risk premium vector and $c_t$ is the volatility of the exterior effect to the market. Generally, all this coefficients are not necessary bounded and the terminal condition $\xi$ can be only integrable. Therefore, all above works are not usable in this case.

The aim of this paper is to correct this gap and prove existence and uniqueness result for $d$-dimensional BDSDEs under the $p$-integrable condition on $\xi,\; f(t,0,0)$ and $g(t,0,0)$ for any $p\in (1,2)$ and the monotonic condition on $f$. To our knowledge, this result do not exist in literature, therefore it is new. Let us remark that in two previous works, we have already derived, in \cite{A1} and \cite{A2}, the existence and uniqueness result for $L^p$-solution for reflected generalized BSDEs.

The paper is organized as follows. In Section 2, we give all notations and
basic identities of this paper. The Section 3 contains essential a priori estimates. In Section 4, we prove existence and uniqueness result.

\section{ Preliminaries}
\setcounter{theorem}{0} \setcounter{equation}{0}
\subsection{Assumptions and basic notations}
Let $\R^{d\times d}$ be identified to the space of real matrices with $d$ rows and $d$ columns; hence for each $z\in\R^{d\times d},\;|z|^2=trace(zz^*)$.

In throughout this paper, we consider the probability space $(\Omega, \mathcal{F}, \P)$ and $T$ a real and positive constant.
We define on $(\Omega, \mathcal{F}, \P)$ two mutually independent standard Brownian motion processes $\{W_t, 0\leq t\leq T\}$ and $\{B_t, 0\leq t\leq T\}$
taking values in $\R^{d}$ and $\R^{\ell}$ respectively. Let $\mathcal{N}$ denote the class of $\P$-null sets of $\mathcal{F}$ and define
\begin{eqnarray*}
\mathcal{F}_{t}=\mathcal{F}_{t,T}^{B}\vee\mathcal{F}^{W}_{t}\vee\mathcal{N},\;\; 0\leq t\leq T
\end{eqnarray*}
where by $\mathcal{F}^{\eta}_{s,t}=\sigma\{\eta_{r}-\eta_{s},s\leq r\leq t\}$ for any $\eta_t$, and $\mathcal{F}^{\eta}_t=\mathcal{F}^{\eta}_{0,t}$.

We emphasize that the collection $\{\mathcal{F}_t, t\in[0, T]\}$ is not a filtration. Indeed, it is neither increasing nor decreasing.

Next, for any real $p>0$, we denote by $\mathcal{S}^{p}(\R^{n})$ the set of jointly measurable processes $
 \{X_{t}\}_{t\in \lbrack 0,T]}$ taking values in $\R^{n}$ such that
\begin{description}
\item $(i)$
\begin{equation*}
\|X\|_{\mathcal{S}^{p}}=\E\left( \sup\limits_{0\leq t\leq
T}|X_{t}|^{p}\right) ^{1\wedge \frac{1}{p}}<+\infty;
\end{equation*}
\item $(ii)$\,$X_t$  is $\mathcal{F}_t$-measurable, for any $t\in[0,T]$.
\end{description}
and $\mathcal{M}^{p}(\R^{n})$) the set of (classes of $d\P\otimes dt$ a.e.
equal) $n$-dimensional jointly measurable processes such that
\begin{description}
\item $(i)$
\begin{eqnarray*}
\|X\|_{\mathcal{M}^{p}}=\E \left[ \left(
\int_{0}^{T}|X_{t}|^{2}dt\right) ^{\frac{p}{2}}\right] ^{1\wedge \frac{1}{p}%
}<+\infty .
\end{eqnarray*}
\item $(ii)$\,$X_t$  is $\mathcal{F}_t$-measurable,\, for a.e. $t\in[0, T]$.
\end{description}
If $p\geq 1$, $(\mathcal{S}^{p}(\R^n),\,\|X\|_{\mathcal{S}^{p}})$ and $(\mathcal{M}%
^{p}(\R^{n}),\,\|X\|_{\mathcal{M}^{p}})$ are Banach spaces.

Let
\begin{eqnarray*}
f:\Omega\times [0,T]\times\R^k
\times\R^{d\times d }\rightarrow \R^d; \ g:\Omega\times [0,T]\times\R^d
\times\R^{d\times d }\rightarrow \R^{d\times\ell}
\end{eqnarray*}
be jointly measurable such that for any $(y,z)\in \R^d\times\R^{d\times d }$. We have
\begin{description}
\item ({\bf H1})\, $f(.,y,z)\in \mathcal{M}^{p}(0,T,\R^d),\,\,\ g(.,y,z)\in\mathcal{M}^{p}(0,T,\R^{d\times\ell})$
\end{description}
\begin{description}
\item $({\bf H2})$
There exists constants $\mu\in\R,\;\lambda>0$  and
$0<\alpha<1$ such that for any $t\in[0,T];\,
(y_{1},z_{1}), (y_{2},z_{2})\in \R^d\times\R^{d\times d}$,\newline
$
\left\{
\begin{array}{l}
(i)\; |f(t,y_{1},z_{1})-f(t,y_{1},z_{2})|\leq\lambda|z_{1}-z_{2}|,\\\\
(ii)\;\langle y_1-y_2,f(t,y_1,z_1)-f(t,y_2,z_1)\rangle\leq\mu|y_1-y_2|^{2},\\\\
(iii)\;|g(t,y_{1},z_{1})-g(t,y_{2},z_{2})|^{2}\leq\lambda|y_{1}-y_{2}|^{2}+\alpha|z_{1}-z_{2}|^{2}.
\end{array}\right.
$
\end{description}
Given a $\R^d$-valued $\mathcal{F}_{T}$-measurable random vector $\xi$,
we consider the backward doubly stochastic
differential equation:
\begin{eqnarray}
Y_{t}=\xi+\int_{t
}^{T}f(s,Y_{s},Z_{s})ds+\int_{t}^{T}g(s,Y_{s},Z_{s})\,\overleftarrow{dB}_{s}-\int_{t}^{
T}Z_{s}\,dW_{s},\,\ 0\leq t\leq T.\label{a1}
\end{eqnarray}
Now we recall what we mean by a solution to the BDSDE \eqref{a1}.
\begin{definition}
A solution of BDSDE $(\ref{a1})$ is a pair $(Y_{t},Z_{t})_{0\leq t \leq
T}$  of jointly measurable processes taking values in $\R^{k}
\times \R^{k\times d}$ and satisfying $(\ref{a1})$ such that: $\P$ a.s., $t\mapsto (Z_t,g(t,Y_t,Z_t))$ belongs in $L^{2}(0,T),\,
t\mapsto f(t,Y_t,Z_t)$ belongs in $L^{1}(0,T)$.
\end{definition}

\subsection{Generalized Tanaka formula}
As explained in the introduction, we want to deal with BDSDEs with data in $L^{p}$, $p\in(1,2)$ like the works of Briand et al. (see \cite{Pal}) which treat BSDEs case i.e $g\equiv 0$.  We start by Tanaka formula relative to BDSDEs, which is the critical tool in this paper. For this, we note
$\hat{x} = |x|^{-1}x{\bf 1}_{\{x\neq0\}}$.
\begin{lemma}
Let $\{K_t\}_{t\in[0,T]},\,\{H_t\}_{t\in[0,T]}$ and $\{G_{t}\}_{t\in[0,T]}$ be jointly measurable such that $K\in \mathcal{M}^{p}(0,T,\R^d),\,\,\ H\in\mathcal{M}^{p}(0,T,\R^{k\times d}),\;\; G\in\mathcal{M}^{p}(0,T,\R^{d\times\ell})$.
We consider the $\R^{d}$-valued semi martingale $\{X_t\}_{t\in[0,T]}$ defined by
\begin{eqnarray}
X_{t}=X_0+\int_{0}^{t}K_s\ ds+\int_{0}^{t}G_s\ \overleftarrow{dB}_{s}
+\int_{0}^{t}H_s\ dW_{s},\;\; 0\leq t\leq T.
\end{eqnarray}
Then, for any $p\geq 1$, we have
\begin{eqnarray*}
|X_{t}|^{p}-{\bf 1}_{\{p=1\}}L_t&=&|X_0|^{p}+p\int_{0}^{t}|X_{s}|^{p-1}\langle\hat{X}_{s},K_s\rangle ds\\
&&+p\int_{0}^{t}|X_{s}|^{p-1}\langle \hat{X}_{s},G_{s}\overleftarrow{dB}_{s}\rangle+p\int_{0}^{t}|X_{s}|^{p-1}\langle \hat{X}_{s},H_{s}dW_{s}\rangle\\
&&-\frac{p}{2}\int_{0}^{t}|X_{s}|^{p-2}{\bf 1}_{\{X_s\neq 0\}}\{(2-p)(|G_s|^{2}-\langle \hat{X}_{s},G_sG_s^{*}\hat{X}_{s}\rangle)+(p-1)|G_s|^{2}\}ds\\
&&+\frac{p}{2}\int_{0}^{t}|X_{s}|^{p-2}{\bf 1}_{\{X_s\neq 0\}}\{(2-p)(|H_s|^{2}-\langle \hat{X}_{s},H_sH_s^{*}\hat{X}_{s}\rangle)+(p-1)|H_s|^{2}\}ds,
\end{eqnarray*}
where $\{L_t\}_{t\in[0,T]}$ is a continuous process with $L_0=0$, which varies only
on the boundary of the random set $\{t\in[0,T], \, X_t = 0\}$.
\end{lemma}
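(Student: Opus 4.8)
\emph{Strategy.} The plan is to smooth the map $x\mapsto|x|^{p}$, apply the change-of-variables formula valid for $C^{2}$ functions of a doubly stochastic semimartingale (Pardoux--Peng \cite{PP1}, Kunita \cite{Kunita}), and then let the smoothing parameter go to $0$, checking every term path by path. For $\varepsilon>0$ put $\varphi_{\varepsilon}(x)=(|x|^{2}+\varepsilon^{2})^{p/2}\in C^{\infty}$, so that
\[
D\varphi_{\varepsilon}(x)=p(|x|^{2}+\varepsilon^{2})^{\frac p2-1}x,\qquad D^{2}\varphi_{\varepsilon}(x)=p(|x|^{2}+\varepsilon^{2})^{\frac p2-1}I+p(p-2)(|x|^{2}+\varepsilon^{2})^{\frac p2-2}xx^{*}.
\]
Applying the $C^{2}$ formula to $\varphi_{\varepsilon}(X_{t})$ --- recalling that the backward integral $\int_{0}^{\cdot}G_{s}\overleftarrow{dB}_{s}$ enters the second-order correction with the opposite sign to the forward one --- yields an exact identity whose first-order part is $p\int_{0}^{t}(|X_{s}|^{2}+\varepsilon^{2})^{\frac p2-1}\langle X_{s},K_{s}\rangle\,ds$ together with the two stochastic integrals against $\overleftarrow{dB}$ and $dW$ with common integrand $p(|X_{s}|^{2}+\varepsilon^{2})^{\frac p2-1}X_{s}$, and whose second-order part is $\pm\frac12\int_{0}^{t}\mathrm{trace}(D^{2}\varphi_{\varepsilon}(X_{s})A_{s}A_{s}^{*})\,ds$ with $A=H$ (sign $+$) and $A=G$ (sign $-$).

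A direct computation gives, for any matrix $A$,
\[
\mathrm{trace}(D^{2}\varphi_{\varepsilon}(x)AA^{*})=p(|x|^{2}+\varepsilon^{2})^{\frac p2-2}\{(|x|^{2}+\varepsilon^{2})|A|^{2}+(p-2)|A^{*}x|^{2}\},
\]
a quantity which (for $p\leq2$) lies between $p(p-1)(|x|^{2}+\varepsilon^{2})^{\frac p2-2}|x|^{2}|A|^{2}\geq0$ and $p(|x|^{2}+\varepsilon^{2})^{\frac p2-1}|A|^{2}$, and which converges, for $x\neq0$ and $\varepsilon\downarrow0$, to $p|x|^{p-2}\{|A|^{2}+(p-2)\langle\hat x,AA^{*}\hat x\rangle\}=p|x|^{p-2}\{(2-p)(|A|^{2}-\langle\hat x,AA^{*}\hat x\rangle)+(p-1)|A|^{2}\}$, which is exactly the integrand appearing in the claim. (For $p\geq2$ the function $|x|^{p}$ is already of class $C^{2}$ and the $C^{2}$ formula applies directly, the case $p>2$ requiring nothing further.)

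\emph{Passage to the limit.} I would first record the fact
\[
\int_{0}^{t}{\bf 1}_{\{X_{s}=0\}}(|H_{s}|^{2}+|G_{s}|^{2})\,ds=0\qquad\text{a.s.},
\]
proved by applying the classical one-dimensional Tanaka formula to each coordinate $X^{i}$ and invoking the occupation-times formula (so that $\int_{0}^{t}{\bf 1}_{\{X^{i}_{s}=0\}}d\langle X^{i}\rangle_{s}=0$), then using $\{X_{s}=0\}\subset\{X^{i}_{s}=0\}$, $d\langle X^{i}\rangle_{s}=((H_{s}H_{s}^{*})_{ii}+(G_{s}G_{s}^{*})_{ii})\,ds$, and summing over $i$. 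Consequently the contribution of $\{X_{s}=0\}$ to each second-order $\varepsilon$-integral --- where the integrand equals $p\varepsilon^{p-2}|A_{s}|^{2}$ --- vanishes identically, so only $\{X_{s}\neq0\}$ matters, where all $\varepsilon$-integrands converge pointwise. The first-order terms pass to the limit by dominated convergence: the relevant integrand differences are bounded in norm by $p|X_{s}|^{p-1}(|K_{s}|+|H_{s}|+|G_{s}|)$, and since $p\geq1$ one has $|X_{s}|^{p-1}\leq1+\sup_{[0,T]}|X_{r}|^{p-1}<\infty$ a.s.\ while $\int_{0}^{T}(|K_{s}|+|H_{s}|^{2}+|G_{s}|^{2})\,ds<\infty$ a.s.; the Lebesgue integral then converges path-wise and the stochastic integrands converge in $L^{2}(ds)$ in probability, forcing the two stochastic integrals to converge in probability, uniformly in $t$. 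When $p=1$, one defines $L_{t}$ to be the (necessarily existing) limit of $\frac12\int_{0}^{t}\mathrm{trace}(D^{2}\varphi_{\varepsilon}(X_{s})(H_{s}H_{s}^{*}-G_{s}G_{s}^{*}))\,ds$ minus the explicit integral $\frac12\int_{0}^{t}|X_{s}|^{-1}{\bf 1}_{\{X_{s}\neq0\}}\{\cdots\}\,ds$; its continuity, $L_{0}=0$, and the fact that it varies only on the boundary of $\{t:X_{t}=0\}$ then follow from the classical theory of local times of multidimensional semimartingales.

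\emph{The main obstacle} is the second-order integrals in the range $1<p<2$, where the limiting integrand is of the order of $|X_{s}|^{p-2}|H_{s}|^{2}$, singular at the origin: one must both make sense of the right-hand side, i.e.\ prove that $\int_{0}^{t}|X_{s}|^{p-2}{\bf 1}_{\{X_{s}\neq0\}}(|H_{s}|^{2}+|G_{s}|^{2})\,ds<\infty$ a.s., and pass to the limit although the upper bound $\mathrm{trace}(D^{2}\varphi_{\varepsilon}(X_{s})A_{s}A_{s}^{*})\leq p(|X_{s}|^{2}+\varepsilon^{2})^{\frac p2-1}|A_{s}|^{2}$ is not a dominating function uniform in $\varepsilon$. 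I would handle this by reducing to the real semimartingale $R_{s}=|X_{s}|^{2}$ (so $|X_{s}|^{p}=R_{s}^{p/2}$ and $u\mapsto u^{p/2}$ is $C^{2}$ away from $0$): the occupation-times formula gives $\int_{0}^{t}R_{s}^{\frac p2-2}{\bf 1}_{\{R_{s}>0\}}d\langle R\rangle_{s}=\int_{0}^{\infty}a^{\frac p2-2}L^{a}_{t}(R)\,da$, which is finite a.s.\ precisely because $p>1$, using $L^{0}_{t}(R)=0$ (itself a consequence of the fact above) together with the behaviour of $a\mapsto L^{a}_{t}(R)$ near $0$; this same quantity then serves as a legitimate dominating function, and dominated convergence delivers the term $\frac p2\int_{0}^{t}|X_{s}|^{p-2}{\bf 1}_{\{X_{s}\neq0\}}\{\cdots\}\,ds$ with no residual local-time contribution when $p>1$. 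Finally, extracting an a.s.\ convergent subsequence for the stochastic integrals and using path-continuity in $t$, the identity holds simultaneously for all $t\in[0,T]$, almost surely.
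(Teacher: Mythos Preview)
Your overall strategy---smoothing $|x|^{p}$ by $\varphi_{\varepsilon}(x)=(|x|^{2}+\varepsilon^{2})^{p/2}$, applying the $C^{2}$ change-of-variables formula for doubly stochastic semimartingales, and letting $\varepsilon\downarrow0$---is exactly the paper's. The treatment of the first-order (drift and stochastic-integral) terms is also essentially the same. The real divergence is in how the second-order terms are handled.

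The paper does \emph{not} look for a dominating function. Instead it uses the algebraic decomposition (borrowed from Briand et al.)
\[
\mathrm{trace}\bigl(D^{2}\varphi_{\varepsilon}(X_{s})AA^{*}\bigr)
=p\Bigl(\tfrac{|X_{s}|}{u_{\varepsilon}(X_{s})}\Bigr)^{4-p}|X_{s}|^{p-2}{\bf 1}_{\{X_{s}\neq0\}}
\bigl\{(2-p)(|A|^{2}-\langle\hat X_{s},AA^{*}\hat X_{s}\rangle)+(p-1)|A|^{2}\bigr\}
+p\,\varepsilon^{2}|A|^{2}u_{\varepsilon}^{p-4}(X_{s}),
\]
with $u_{\varepsilon}(x)=(|x|^{2}+\varepsilon^{2})^{1/2}$. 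Since $|X_{s}|/u_{\varepsilon}(X_{s})\nearrow{\bf 1}_{\{X_{s}\neq0\}}$ and the bracketed expression is nonnegative for $p\in[1,2]$, the first piece converges by \emph{monotone} convergence, with no need to establish finiteness beforehand. The residual $L^{\varepsilon}_{t}(p)=\frac p2\int_{0}^{t}\varepsilon^{2}u_{\varepsilon}^{p-4}(X_{s})(|H_{s}|^{2}-|G_{s}|^{2})\,ds$ is then shown to vanish for $p>1$ by a short H\"older interpolation between $p=1$ and $p=4$, and for $p=1$ its limit $L_{t}$ is analysed directly: on the interior of $\{X=0\}$ the quadratic variation is flat so $C^{\varepsilon}_{s}(1)\to0$, and likewise on the complement of $\{X=0\}$; hence $L$ can only move on the boundary.

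Your route---showing $\int_{0}^{t}{\bf 1}_{\{X_{s}=0\}}(|H_{s}|^{2}+|G_{s}|^{2})\,ds=0$ via coordinatewise Tanaka, then using the occupation-times formula for $R=|X|^{2}$ to obtain $\int_{0}^{t}|X_{s}|^{p-2}{\bf 1}_{\{X_{s}\neq0\}}(|H_{s}|^{2}+|G_{s}|^{2})\,ds<\infty$ as a dominating function---is conceptually sound for ordinary semimartingales, but here it carries a real cost: $X$ (and each coordinate $X^{i}$, and $R$) involves a backward Kunita--It\^o integral and is \emph{not} a semimartingale with respect to a single filtration, so the classical Tanaka formula, occupation-times identity, and regularity of $a\mapsto L^{a}_{t}(R)$ are not available off the shelf. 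You would have to develop or cite a doubly-stochastic version of that machinery, which is considerably heavier than the monotone-convergence/H\"older trick the paper uses. Similarly, your $p=1$ paragraph defers the crucial property of $L$ (that it varies only on $\partial\{X=0\}$) to ``classical theory of local times of multidimensional semimartingales''; the paper instead gives a short self-contained argument based on the explicit form of $C^{\varepsilon}_{s}(1)$, which you should reproduce rather than cite.
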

\begin{proof}
Since the function $x\mapsto|x|^p$ is not smooth enough, for $p\in[1,2)$, we approximate it by the function $
\begin{array}{l}
u_{\varepsilon}(x)=(|x|^{2}+\varepsilon^{2})^{1/2},\;\; \forall\;\;\varepsilon >0,
\end{array}
$
which is actually a smooth function. We have, denoting $I$ the identity
matrix of $\R^{d\times d}$,
\begin{eqnarray*}
\nabla u_{\varepsilon}^{p}(x)=pu_{\varepsilon}^{p-2}(x)x;\;\;\; D^2u_{\varepsilon}^{p}(x)=pu^{p-2}_{\varepsilon}(x)I+p(p-2)u^{p-4}_{\varepsilon}(x)(x\otimes x)
\end{eqnarray*}
such that Itô's formula leads
\begin{eqnarray}
u_{\varepsilon}^{p}(X_{t})&=&u^{p}_{\varepsilon}(X_0)+p\int_{0}^{t}u_{\varepsilon}^{p-2}(X_{s})\langle X_{s},K_s\rangle ds\nonumber\\
&&+p\int_{0}^{t}u_{\varepsilon}^{p-2}(X_{s})\langle X_{s},G_{s}\overleftarrow{dB}_{s}\rangle+p\int_{0}^{t}u_{\varepsilon}^{p-2}(X_{s})\langle X_{s},H_{s}dW_{s}\rangle\nonumber\\
&&-\frac{1}{2}\int_{0}^{t}trace(D^{2}u_{\varepsilon}^{p}(X_{s})G_sG_s^{*})ds+\frac{1}{2}\int_{0}^{t}trace(D^{2}u_{\varepsilon}^{p}(X_{s})H_sH_s^{*})ds.\label{Tanaka}
\end{eqnarray}
It remains essentially to pass to the limit when $\varepsilon\rightarrow 0$ in \eqref{Tanaka}. To do this, let remark first that
\begin{eqnarray*}
\int_{0}^{t}u_{\varepsilon}^{p-2}(X_{s})\langle X_{s},K_s\rangle ds\rightarrow\int_{0}^{t}|X_{s}|^{p-1}\langle \widehat{X}_{s},K_s\rangle ds,\;\; \P\mbox{-a.s}.
\end{eqnarray*}
We also have
\begin{eqnarray*}
\int_{0}^{t}u_{\varepsilon}^{p-2}(X_{s})\langle X_{s},G_s\overleftarrow{dB}_s\rangle\rightarrow\int_{0}^{t}|X_{s}|^{p-1}\langle
 \widehat{X}_{s},G_s\overleftarrow{dB}_s\rangle
\end{eqnarray*}
and
\begin{eqnarray*}
\int_{0}^{t}u_{\varepsilon}^{p-2}(X_{s})\langle X_{s},H_sdW_s\rangle\rightarrow\int_{0}^{t}|X_{s}|^{p-1}\langle \widehat{X}_{s},H_sdW_s\rangle
\end{eqnarray*}
in $\P$-probability uniformly on $[0, T]$. The convergence of the stochastic integrals follows from the following convergence:
\begin{eqnarray*}
\int_{0}^{T}|X_s|^2{\bf 1}_{\{X_s\neq 0\}}|G_s|^2(|X_{s}|^{p-2}-u_{\varepsilon}^{p-2}(X_{s}))^2ds\rightarrow 0
\end{eqnarray*}
and
\begin{eqnarray*}
\int_{0}^{T}|X_s|^2{\bf 1}_{\{X_s\neq 0\}}|H_s|^2(|X_{s}|^{p-2}-u_{\varepsilon}^{p-2}(X_{s}))^2ds\rightarrow 0,
\end{eqnarray*}
which is provided by the dominated convergence theorem.

It remains to study the convergence of the term including the second derivative of
$u_{\varepsilon}$. It is shown in \cite{Pal} that
\begin{eqnarray*}
trace(D^{2}u_{\varepsilon}^{p}(X_{s})G_sG_s^{*})&=& p(2-p)(|X_s|u^{-1}_{\varepsilon}(X_{s}))^{4-p}|X_s|^{p-2}{\bf 1}_{\{X_s\neq 0\}}(|G_s|^2-\langle\widehat{X}_s,G_sG^*_s\widehat{X}_s\rangle)\\
&&+p(p-1)(|X_s|u^{-1}_{\varepsilon}(X_{s}))^{4-p}|X_s|^{p-2}{\bf 1}_{\{X_s\neq 0\}}|G_s|^2+p\varepsilon^2|G_s|^2u^{p-4}_{\varepsilon}(X_{s})
\end{eqnarray*}
and
\begin{eqnarray*}
trace(D^{2}u_{\varepsilon}^{p}(X_{s})H_sH_s^{*})&=& p(2-p)(|X_s|u^{-1}_{\varepsilon}(X_{s}))^{4-p}|X_s|^{p-2}{\bf 1}_{\{X_s\neq 0\}}(|H_s|^2-\langle\widehat{X}_s,H_sH^*_s\widehat{X}_s\rangle)\\
&&+p(p-1)(|X_s|u^{-1}_{\varepsilon}(X_{s}))^{4-p}|X_s|^{p-2}{\bf 1}_{\{X_s\neq 0\}}|H_s|^2+p\varepsilon^2|H_s|^2u^{p-4}_{\varepsilon}(X_{s}).
\end{eqnarray*}
One has also
\begin{eqnarray}
|G_s|^2 &\geq& \langle \widehat{X}_s,G_sG_s^*\widehat{X}_s\rangle \nonumber\\
|H_s|^2 &\geq& \langle \widehat{X}_s,H_sH_s^*\widehat{X}_s\rangle\label{ingd}
\end{eqnarray}
and $$\frac{|X_s|}{u_{\varepsilon}(X_s)}\nearrow {\bf 1}_{\{X_s\neq 0\}}$$ as $\varepsilon\rightarrow 0$. Hence by monotone convergence, as $\varepsilon\rightarrow 0$,
\begin{eqnarray*}
\int_0^t(|X_s|u^{-1}_{\varepsilon}(X_{s}))^{4-p}|X_s|^{p-2}{\bf 1}_{\{X_s\neq 0\}}\{(2-p)(|G_s|^2-\langle\widehat{X}_s,G_sG^*_s\widehat{X}_s\rangle)+(p-1)|G_s|^2\}ds
\end{eqnarray*}
converges to
\begin{eqnarray*}
\int_0^t|X_s|^{p-2}{\bf 1}_{\{X_s\neq 0\}}\{(2-p)(|G_s|^2-\langle\widehat{X}_s,G_sG^*_s\widehat{X}_s\rangle)+(p-1)|G_s|^2\}ds
\end{eqnarray*}
and
\begin{eqnarray*}
\int_0^t(|X_s|u^{-1}_{\varepsilon}(X_{s}))^{4-p}|X_s|^{p-2}{\bf 1}_{\{X_s\neq 0\}}\{(2-p)(|H_s|^2-\langle\widehat{X}_s,H_sH^*_s\widehat{X}_s\rangle)+(p-1)|H_s|^2\}ds
\end{eqnarray*}
converges to
\begin{eqnarray*}
\int_0^t|X_s|^{p-2}{\bf 1}_{\{X_s\neq 0\}}\{(2-p)(|H_s|^2-\langle\widehat{X}_s,H_sH^*_s\widehat{X}_s\rangle)+(p-1)|H_s|^2\}ds,
\end{eqnarray*}
$\P$-a.s., for all $0\leq t\leq T$.

Let denote
\begin{eqnarray*}
L_t^{\varepsilon}(p)=\int_0^tC_s^{\varepsilon}(p)ds,
\end{eqnarray*}
where $C_s^{\varepsilon}(p)=\frac{p}{2}\varepsilon^2u^{p-4}_{\varepsilon}(X_{s})(|H_s|^2-|G_s|^2)$.
Then it follows from \eqref{Tanaka} that $L^{\varepsilon}(p)$ converges to a continuous process $L(p)$ as $\varepsilon\rightarrow 0$ such that $L(p)\equiv 0$ for $p>1$. Indeed, for $p\geq 4,\, L(p)\equiv 0$ since $C_s^{\varepsilon}(p)$ converges to $0$ in $L^1(0, T)$.
Next, if $p\in(1, 4)$, by setting $\theta= (4-p)/3\in(0, 1)$ we get
\begin{eqnarray*}
L^{\varepsilon}_t=\frac{p}{2}\int_0^t\left(\varepsilon^2(|H_s|^2-|G_s|^2)u^{-3}_{\varepsilon}(X_{s})\right)^{\theta}\left(\varepsilon^2(|H_s|^2-|G_s|^2)\right)^{1-\theta}ds.
\end{eqnarray*}
Hence Hölder's inequality provides
\begin{eqnarray*}
L_t^{\varepsilon}(p)\leq p(L_t^{\varepsilon}(1))^{\theta}\left(\int^{T}_{0}\varepsilon^2(|H_s|^2-|G_s|^2)ds\right)^{1-\theta}
\end{eqnarray*}
which tends to $0$ as $\varepsilon\rightarrow 0$ for each $t\in[0,T]$.

For $p=1$, let set $L(1)=L$ and denote $A=\{t\in[0,T],\; X_t=0\}$. If $t$ belongs in the interior of $A$, one can find $\delta>0$ such that $X_s=0$ whenever $|t-s|\leq \delta$. Therefore, the quadratic variation of $X$ is constant on  $[t-\delta,t+\delta]$ and then $H_s=\pm G_s$ almost everywhere on this interval. On the other hand, if $t$ belongs in the complement of the set $A$, there exits $\delta>0$ such that $X_s\neq 0$ when $|t-s|\leq \delta$. In both cases, $C^{\varepsilon}(1)$ converges to $0$ in $L^1(t-\delta,t+\delta)$ and
\begin{eqnarray*}
L_{s}-L_{r}=\lim_{\varepsilon \rightarrow 0} \int^{s}_{r}C^{\varepsilon}_s(1)ds=0,\; \forall\; s,r\in(t-\delta,t+\delta).
\end{eqnarray*}
Therefore $L_t$ is neither increasing nor decreasing and  varies only on the boundary on $A$.
This concludes the proof of the lemma.
\end{proof}
\begin{remark}
Since the process $L$ is neither increasing nor decreasing, we can not apply the similarly argument used in \cite{Pal}. Therefore
the following corollary works only in the case $p\in(1,2)$, which correspond to our framework.
\end{remark}
\begin{corollary}
Let $p\in(1,2)$ and denote $c(p)=p(p-1)/2$ and $\bar{c}(p)=p(3-p)/2$ . If $(Y,Z)$ is a solution of the BDSDE \eqref{a1}, then for $0\leq t\leq T$
\begin{eqnarray*}
&&|Y_{t}|^{p}+c(p)\int_{t}^{T}|Y_s|^{p-2}{\bf 1}_{\{Y_s\neq 0\}}|Z_s|^{2}ds\\
&\leq &|Y_T |^{p}+p\int_{t}^{T}|Y_{s}|^{p-1}\langle\hat{Y}_{s},f(s,Y_s,Z_s)\rangle ds\\
&&+\bar{c}(p)\int_{t}^{T}|Y_s|^{p-2}{\bf 1}_{\{Y_s\neq 0\}}|g(s,Y_s,Z_s)|^{2}ds\\
&&+p\int_{t}^{T}|Y_{s}|^{p-1}\langle \hat{Y}_{s},g(s,Y_{s},Z_s)\overleftarrow{dB}_{s}\rangle-p\int_{t}^{T}|Y_{s}|^{p-1}\langle \hat{Y}_{s},Z_{s}dW_{s}\rangle.
\end{eqnarray*}
\end{corollary}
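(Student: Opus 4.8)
The plan is to apply the generalized Tanaka formula of the preceding Lemma to the process $Y$ itself, after rewriting the BDSDE \eqref{a1} in the forward It\^o form used there. Subtracting \eqref{a1} taken at time $t$ from \eqref{a1} taken at time $0$ gives
\[
Y_t = Y_0 - \int_0^t f(s,Y_s,Z_s)\,ds - \int_0^t g(s,Y_s,Z_s)\,\overleftarrow{dB}_s + \int_0^t Z_s\,dW_s,\qquad 0\le t\le T,
\]
so $Y$ is exactly a semimartingale of the type covered by the Lemma, with $X_0=Y_0$, $K_s=-f(s,Y_s,Z_s)$, $G_s=-g(s,Y_s,Z_s)$ and $H_s=Z_s$. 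Since $(Y,Z)$ solves \eqref{a1}, the processes $Z$ and $g(\cdot,Y,Z)$ lie in $L^2(0,T)$ and $f(\cdot,Y,Z)$ in $L^1(0,T)$, $\P$-a.s.; to obtain the $\mathcal{M}^{p}$ integrability demanded in the Lemma I would localize along a sequence of stopping times reducing these processes and remove the localization at the end, every term of the formula being stable under such limits. As $p>1$ we have ${\bf 1}_{\{p=1\}}=0$ and, as established in the proof of the Lemma, the process $L(p)$ vanishes identically; hence the Tanaka formula holds as a plain identity, with no local-time correction.

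Next I would write the Tanaka formula at $\tau=T$ and at $\tau=t$ and subtract the two, which expresses $|Y_t|^p$ through $|Y_T|^p$, a first-order $ds$-integral, two stochastic integrals, and two second-order $ds$-integrals, one carrying $Z$ (from the $H$-term) and one carrying $g_s:=g(s,Y_s,Z_s)$ (from the $G$-term). Inserting $K_s=-f$, $G_s=-g$ and $H_s=Z$, the first-order term becomes $p\int_t^T|Y_s|^{p-1}\langle\hat{Y}_s,f(s,Y_s,Z_s)\rangle\,ds$, the $B$-integral becomes $p\int_t^T|Y_s|^{p-1}\langle\hat{Y}_s,g(s,Y_s,Z_s)\overleftarrow{dB}_s\rangle$, and the $W$-integral becomes $-p\int_t^T|Y_s|^{p-1}\langle\hat{Y}_s,Z_s\,dW_s\rangle$. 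After moving the $Z$ second-order integral to the left-hand side, the identity reads
\[
|Y_t|^p + \tfrac{p}{2}\int_t^T|Y_s|^{p-2}{\bf 1}_{\{Y_s\neq0\}}\bigl\{(2-p)\bigl(|Z_s|^2-\langle\hat{Y}_s,Z_sZ_s^*\hat{Y}_s\rangle\bigr)+(p-1)|Z_s|^2\bigr\}\,ds=(\text{right-hand side}),
\]
the right-hand side still carrying the analogous $g_s$-integral with coefficient $+\tfrac{p}{2}$.

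The step where the restriction $p<2$ is essential is the estimation of the two second-order integrands. Since $2-p>0$ and, by \eqref{ingd} applied with $G_s=-g_s$ and $H_s=Z_s$, both $|Z_s|^2-\langle\hat{Y}_s,Z_sZ_s^*\hat{Y}_s\rangle\ge0$ and $|g_s|^2-\langle\hat{Y}_s,g_sg_s^*\hat{Y}_s\rangle\ge0$, the $(2-p)(\cdots)$ contributions have a fixed sign. For the $Z$-bracket I would drop the nonnegative $(2-p)(\cdots)$ part, so that it is $\ge(p-1)|Z_s|^2$; multiplying by $\tfrac{p}{2}$, the left-hand side is then $\ge|Y_t|^p+c(p)\int_t^T|Y_s|^{p-2}{\bf 1}_{\{Y_s\neq0\}}|Z_s|^2\,ds$. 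For the $g_s$-bracket I would bound above, $(2-p)(|g_s|^2-\langle\hat{Y}_s,g_sg_s^*\hat{Y}_s\rangle)+(p-1)|g_s|^2\le(2-p)|g_s|^2+(p-1)|g_s|^2=|g_s|^2\le(3-p)|g_s|^2$, so that, with the factor $\tfrac{p}{2}$, the corresponding integral is $\le\bar c(p)\int_t^T|Y_s|^{p-2}{\bf 1}_{\{Y_s\neq0\}}|g(s,Y_s,Z_s)|^2\,ds$. Combining these with the first-order and stochastic terms computed above yields exactly the stated inequality. The only genuinely delicate points are the sign bookkeeping in passing between the forward and backward forms of the equation and the use of $p\in(1,2)$ to give the correct sign to the second-order terms; the localization needed to meet the Lemma's $\mathcal{M}^{p}$ hypotheses is routine.
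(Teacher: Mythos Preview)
Your proposal is correct and follows essentially the same route as the paper: apply the Tanaka-type Lemma to $Y$ (with $K=-f$, $G=-g$, $H=Z$), write the resulting identity between $t$ and $T$, and then exploit $2-p>0$ together with $\langle\hat{Y}_s,MM^*\hat{Y}_s\rangle\le|M|^2$ to bound the $Z$-bracket below by $(p-1)|Z_s|^2$ and the $g$-bracket above by $(3-p)|g_s|^2$. Your treatment of the signs when passing from the backward to the forward form is in fact cleaner than the paper's (which writes ``replace $(X,K,H,G)$ by $(Y,f,Z,g)$'' but then states the correct identity \eqref{corollary1}), and you even note the sharper intermediate bound $|g_s|^2$ before relaxing to $(3-p)|g_s|^2$; the paper goes directly to the weaker constant. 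One caution on your localization remark: the family $\{\mathcal{F}_t\}$ here is neither increasing nor decreasing, so ``stopping times'' are not available in the usual sense; however the Lemma's proof is in fact a pathwise $\varepsilon\to0$ argument that really only needs $Z,g(\cdot,Y,Z)\in L^2(0,T)$ and $f(\cdot,Y,Z)\in L^1(0,T)$ almost surely, which is guaranteed by the definition of solution, so no genuine localization is required.
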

\begin{proof}
The proof follows from Lemma 2.2. Indeed, recall that $(Y,Z)$ is solution of BDSDE \eqref{a1} and replace
$(X,K,H,G)$ by $(Y,f(.,Y,Z),Z,g(.,Y,Z))$, it follows that
\begin{eqnarray}
&&|Y_t |^{p}+\frac{p}{2}\int_{t}^{T}|Y_{s}|^{p-2}{\bf 1}_{\{Y_s\neq 0\}}\{(2-p)(|Z_s|^{2}-\langle \hat{Y}_{s},Z_sZ_s^{*}\hat{Y}_{s}\rangle)+(p-1)|Z_s|^{2}\}ds\nonumber\\
&=&|Y_{T}|^{p}+p\int_{t}^{T}|Y_{s}|^{p-1}\langle\hat{Y}_{s},f(s,Y_s,Z_s)\rangle ds+p\int_{t}^{T}|Y_{s}|^{p-1}\langle \hat{Y}_{s},g(s,Y_s,Z_s)\overleftarrow{dB}_{s}\rangle-p\int_{t}^{T}|Y_{s}|^{p-1}\langle \hat{Y}_{s},Z_{s}dW_{s}\rangle\nonumber\\
&&+\frac{p}{2}\int_{t}^{T}|Y_{s}|^{p-2}{\bf 1}_{\{Y_s\neq 0\}}\{(2-p)(|g(s,Y_s,Z_s)|^{2}-\langle \hat{Y}_{s},g(s,Y_s,Z_s)g^{*}(s,Y_s,Z_s)\hat{Y}_{s}\rangle)+(p-1)|g(s,Y_s,Z_s)|^{2}\}ds.\nonumber\\
\label{corollary1}
\end{eqnarray}
Since $p\in (1,2)$, it follows from \eqref{ingd} that
\begin{eqnarray}
&&(p-1)\int_{t}^{T}|Y_{s}|^{p-2}{\bf 1}_{\{Y_s\neq 0\}}|Z_s|^{2}ds\nonumber\\
&\leq& \int_{t}^{T}|Y_{s}|^{p-2}{\bf 1}_{\{Y_s\neq 0\}}\{(2-p)(|Z_s|^{2}-\langle \hat{Y}_{s},Z_sZ_s^{*}\hat{Y}_{s}\rangle)+(p-1)|Z_s|^{2}\}ds.\label{corollary2}
\end{eqnarray}
and
\begin{eqnarray}
&&\int_{t}^{T}|Y_{s}|^{p-2}{\bf 1}_{\{Y_s\neq0\}}\{(2-p)(|g(s,Y_s,Z_s)|^{2}-\langle\hat{Y}_{s},g(s,Y_s,Z_s)g^{*}(s,Y_s,Z_s)\hat{Y}_{s}\rangle)+(p-1)|g(s,Y_s,Z_s)|^{2}\}ds\nonumber\\
&\leq&(3-p)\int_{t}^{T}|Y_{s}|^{p-2}{\bf 1}_{\{Y_s\neq0\}}|g(s,Y_s,Z_s)|^{2}ds.\label{corollary3}
\end{eqnarray}
Therefore putting \eqref{corollary2} and \eqref{corollary3} to \eqref{corollary1} we obtain
\begin{eqnarray*}
&&|Y_t |^{p}+c(p)\int_{t}^{T}|Y_{s}|^{p-2}{\bf 1}_{\{Y_s\neq 0\}}|Z_s|^{2}ds\nonumber\\
&\leq&|Y_{T}|^{p}+p\int_{t}^{T}|Y_{s}|^{p-1}\langle\hat{Y}_{s},f(s,Y_s,Z_s)\rangle ds+p\int_{t}^{T}|Y_{s}|^{p-1}\langle \hat{Y}_{s},g(s,Y_s,Z_s)\overleftarrow{dB}_{s}\rangle-p\int_{t}^{T}|Y_{s}|^{p-1}\langle \hat{Y}_{s},Z_{s}dW_{s}\rangle\nonumber\\
&&+\bar{c}(p)\int_{t}^{T}|Y_{s}|^{p-2}{\bf 1}_{\{Y_s\neq 0\}}|g(s,Y_s,Z_s)|^{2}ds.
\label{corollary}
\end{eqnarray*}
which proved the result.
\end{proof}

\section{A priori estimates}
In this section, we state some estimations concerning solution to BDSDE \eqref{a1}. These estimates are very useful for
the study of existence and uniqueness of solution. In what follows, we are two difficulty. The function $f$ is not Lipschitz continuous and
we desire obtain estimates in $L^p$-sense, $p\in(1,2)$.

We begin by derive the following result which permit us to control the process
$Z$ by the data and the process $Y$.
\begin{lemma}
Let assumptions $\left({\bf H1}\right)$-$\left({\bf H2 }\right)$ hold and $\left( Y,Z\right) $ be a solution of BDSDE $(\ref{a1})$.
If $Y\in \mathcal{S}^{p}$ then $Z$ belong to $\mathcal{M}%
^{p}$ and there exists a real constant $C_{p,\lambda}$ depending only on $p,\; T$ and $\lambda$
such that
\begin{eqnarray*}
\E\left[ \left( \int_{0}^{T}|Z_{r}|^{2}dr\right)
^{p/2}\right]
  &\leq &C_{p}%
\E\left\{ \sup_{0\leq t\leq T}|Y_{t}|^{p}+\left(
\int_{0}^{T}|f^{0}_{r}|dr\right)^{p}+ \left( \int_{0}^{T}|g^{0}_{r}|^{2}dr\right)
^{p/2}\right\}.
\end{eqnarray*}
\end{lemma}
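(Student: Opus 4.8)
The plan is to apply the generalized Tanaka formula (Lemma~2.2) with $p=2$ to the semimartingale $Y$ on the whole interval $[0,T]$, which for $p=2$ reduces to the classical It\^o formula and produces no local-time term: writing $f^0_r=f(r,0,0)$, $g^0_r=g(r,0,0)$, one gets
\begin{eqnarray*}
|Y_0|^{2}+\int_0^T|Z_s|^2ds&=&|Y_T|^2+2\int_0^T\langle Y_s,f(s,Y_s,Z_s)\rangle ds+\int_0^T|g(s,Y_s,Z_s)|^2ds\\
&&+2\int_0^T\langle Y_s,g(s,Y_s,Z_s)\overleftarrow{dB}_s\rangle-2\int_0^T\langle Y_s,Z_sdW_s\rangle.
\end{eqnarray*}
First I would estimate the drift terms using (H2): by $(ii)$ and $(i)$, $\langle Y_s,f(s,Y_s,Z_s)\rangle\le \mu|Y_s|^2+\lambda|Y_s||Z_s|+|Y_s||f^0_s|$, and by $(iii)$, $|g(s,Y_s,Z_s)|^2\le 2|g^0_s|^2+2\lambda|Y_s|^2+2\alpha|Z_s|^2$ (using $|a+b|^2\le 2|a|^2+2|b|^2$ on $g(s,Y_s,Z_s)-g(s,0,0)$ plus $g(s,0,0)$). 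Since $\alpha<1$, the term $2\alpha\int_0^T|Z_s|^2ds$ can be absorbed into the left-hand side, leaving a coefficient $(1-2\alpha)$... here one must be slightly careful because $\alpha$ is only assumed $<1$, not $<1/2$; so instead I would not absorb fully but rather first bound $2\lambda|Y_s||Z_s|\le \eta|Z_s|^2+\lambda^2\eta^{-1}|Y_s|^2$ with $\eta$ chosen so that $2\alpha+\eta<1$ is not needed — rather, since we are heading to an $L^{p/2}$ estimate of $\int_0^T|Z|^2$, what we really want is a pathwise inequality of the form
\begin{eqnarray*}
\int_0^T|Z_s|^2ds\le C\Big(\sup_{t}|Y_t|^2+\big(\int_0^T|f^0_s|ds\big)^2+\int_0^T|g^0_s|^2ds\Big)+C\sup_t|Y_t|\cdot\Big(M_T+N_T\Big)
\end{eqnarray*}
where $M_t=\int_0^t\langle \hat Y_s,g(s,Y_s,Z_s)\overleftarrow{dB}_s\rangle$-type and $N_t=\int_0^t\langle\hat Y_s,Z_sdW_s\rangle$ are the martingale parts, after moving them to the other side, taking $\sup$ and using $2ab\le \epsilon a^2+\epsilon^{-1}b^2$ to split $|Y_s||Z_s|$ and $|Y_s||f^0_s|$; the $\int|Y_s|^2ds$ terms are controlled by $T\sup_t|Y_t|^2$.

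Next I would raise to the power $p/2<1$ (using $(a+b)^{p/2}\le a^{p/2}+b^{p/2}$) and take expectations; the only delicate contributions are $\E[\sup_t|Y_t|^{p/2}(M_T^*+N_T^*)^{p/2}]$ where $M^*,N^*$ denote the running suprema of the two stochastic integrals. I would apply Young's inequality $xy\le \epsilon x^2+\tfrac{1}{4\epsilon}y^2$ with $x=\sup_t|Y_t|^{p/2}$, $y=(M^*_T)^{p/2}$ etc., so these become $\epsilon\E\sup_t|Y_t|^p+C_\epsilon\E[(M_T^*)^{p/2}+(N_T^*)^{p/2}]$. Then the Burkholder--Davis--Gundy inequality gives $\E(N_T^*)^{p/2}\le c_p\,\E\big(\int_0^T|Y_s|^2|Z_s|^2ds\big)^{p/4}\le c_p\,\E\big(\sup_t|Y_t|^{p/2}(\int_0^T|Z_s|^2ds)^{p/2}\big)$, and once more Young's inequality splits this as $\epsilon'\E(\int_0^T|Z_s|^2ds)^{p/2}+C_{\epsilon'}\E\sup_t|Y_t|^p$; the backward stochastic integral $M$ is handled identically after bounding $|g(s,Y_s,Z_s)|^2$ as above, picking up an extra $\alpha$-dependent multiple of $\int_0^T|Z_s|^2ds$ which, crucially, again only enters through a term that after BDG and Young carries an arbitrarily small prefactor $\epsilon''$. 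Choosing $\epsilon,\epsilon',\epsilon''$ small enough that all the $\E(\int_0^T|Z_s|^2ds)^{p/2}$ terms on the right sum to at most $\tfrac12$ of the left side, I absorb them and obtain the claimed bound with $C_p=C_{p,\lambda}$ depending on $p,T,\lambda$ (and $\alpha$, but $\alpha$ is a fixed structural constant).

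The main obstacle, and the point needing the most care, is the absorption argument: the term $\int_0^T|Z_s|^2ds$ appears on the right-hand side in several guises — directly from $|g(s,Y_s,Z_s)|^2$ with coefficient $\sim\alpha$, and hidden inside the quadratic variations of both stochastic integrals — and one must verify that all of these can be pushed to the left with total coefficient $<1$. The direct $\alpha$-term is the worry since $\alpha$ is only $<1$; this is resolved because in the final $L^{p/2}$ estimate that term is never left "bare" — it always sits under a $\sup_t|Y_t|$ factor or inside a BDG bound, so it comes with a tunable $\epsilon$ and can be made negligible regardless of the size of $\alpha$. A secondary technical point is justifying that the stochastic integrals are genuine martingales (or at least that BDG applies) given only $Y\in\mathcal S^p$, $Z\in\mathcal M^p$ with $p<2$; the standard fix is a localization/stopping-time argument, deriving the estimate first for $Y$ stopped at $\tau_n=\inf\{t:\int_0^t|Z_s|^2ds\ge n\}$ and then letting $n\to\infty$ by Fatou on the left and monotone/dominated convergence on the right.
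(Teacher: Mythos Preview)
Your overall strategy matches the paper's: apply It\^o's formula to $|Y|^2$ (the paper uses $e^{at}|Y_t|^2$, but that weight is only a device to kill the $|Y|^2$ drift terms, which you handle equivalently by $\int_0^T|Y_s|^2\,ds\le T\sup_t|Y_t|^2$), localize with $\tau_n=\inf\{t:\int_0^t|Z|^2\ge n\}\wedge T$, raise to the power $p/2$, take expectations, and use BDG plus Young to absorb the stochastic integrals, finishing with Fatou. That is exactly the paper's program.

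However, there is a genuine gap in your handling of the term $\int_0^T|g(s,Y_s,Z_s)|^2\,ds$. Contrary to your claim, this term \emph{is} bare: it is the quadratic-variation contribution of the backward integral in It\^o's formula for $|Y|^2$ and carries no $Y$-factor, nor is it hidden inside a BDG bound. With your splitting $|g(s,Y_s,Z_s)|^2\le 2|g^0_s|^2+2\lambda|Y_s|^2+2\alpha|Z_s|^2$, the piece $2\alpha\int_0^T|Z_s|^2\,ds$ must be moved to the left \emph{before} you can write the pathwise inequality you display, and that forces $2\alpha<1$, which is not assumed. Raising first to the power $p/2$ does not help: subadditivity for $p/2<1$ gives the coefficient $(2\alpha)^{p/2}$, still requiring $\alpha<1/2$. (The $\alpha$-contribution you get \emph{after} BDG on the backward stochastic integral does sit under $\sup|Y|$ and is tunable, as you say; it is the direct quadratic-variation term that is the problem.)

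The fix is immediate and is precisely what the paper does: replace the crude bound $|a+b|^2\le 2|a|^2+2|b|^2$ by the sharper $|a+b|^2\le(1+\varepsilon')|a|^2+(1+1/\varepsilon')|b|^2$, obtaining
\[
|g(s,Y_s,Z_s)|^2\le(1+\varepsilon')\lambda|Y_s|^2+(1+\varepsilon')\alpha|Z_s|^2+(1+1/\varepsilon')|g^0_s|^2,
\]
and then choose $\varepsilon,\varepsilon'>0$ small enough that $\varepsilon+(1+\varepsilon')\alpha<1$, which is possible since $\alpha<1$. With this single correction your argument goes through and coincides with the paper's proof.
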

\begin{proof}
For each integer $n$, let us define
\begin{eqnarray*}
\tau_{n}=\inf \left \{t\in [0,T],\int_{0}^{t}|Z_{r}|^2dr \geq n \right \}
 \wedge T.
\end{eqnarray*}
The sequence $(\tau_n)_{n\geq 0}$ is stationary since the process $Z$ belongs to $L^{2}(0,T)$ and then\newline $\int^{T}_{0} |Z_s|^{2}ds <\infty$, $\P$- a.s.

For arbitrary real $a$, using It\^o's formula, we have
\begin{eqnarray}
&&|Y_{0}|^{2}+\int_{0}^{\tau_{n}}e^{ar}|Z_{r}|^{2}dr\nonumber\\ &=&
e^{a\tau_n}|Y_{\tau_{n}}|^{2}+2\int_{0}^{\tau_{n}}e^{ar}\langle Y_{r}, f(r,Y_{r},Z_{r})-aY_r\rangle dr
 + \int_{0}^{\tau_{n}} e^{ar}|g(r,Y_{r},Z_r)|^{2}dr\nonumber \\
&& + 2\int_{0}^{\tau_{n}}e^{ar}\langle Y_{r},g(r,Y_r,Z_r) \overleftarrow{dB}_{r}\rangle - 2\int_{0}^{\tau_{n}}
e^{ar}\langle Y_{r}, Z_{r} dW_{r}\rangle.\label{estZ1}
\end{eqnarray}
But, it follows from  assumptions $({\bf H1})$-$({\bf H2})$ and inequality $2bd\leq \frac{1}{\varepsilon}b^{2}+\varepsilon d^{2}$ that,
for any arbitrary positive real constant $\varepsilon$ and $\varepsilon'$,
\begin{eqnarray*}
2\langle Y_{r}, f(r,Y_{r},Z_{r})-aY_r\rangle&\leq& 2|Y_r||f^{0}_r|+2\mu|Y_r|^{2}+2\lambda|Y_r||Z_r|-a|Y_r|^{2}\\
&\leq &2|Y_r||f^{0}_r|+(2\mu+2\lambda+\varepsilon^{-1}\lambda^{2}-a)|Y_r|^{2}+\varepsilon|Z_r|^{2},
\\
\|g(r,Y_r,Y_r)\|^{2}&\leq& (1+\varepsilon')\lambda|Y_r|^{2}+(1+\varepsilon')\alpha|Z_r|^{2}+(1+\frac{1}{\varepsilon'})|g^{0}_r|^{2}.
\end{eqnarray*}
Thus, since $\tau_n\leq T$, taking $\varepsilon,\;\varepsilon'$ such that $\varepsilon+(1+\varepsilon')\alpha<1$ and\newline $2\mu+(3+\varepsilon')\lambda+\varepsilon^{-1}\lambda^{2}-a\leq 0$, we deduce
\begin{eqnarray}
\left(\int_{0}^{\tau_{n}}|Z_{r}|^{2}dr\right)^{p/2} &\leq&
C_{p,\lambda}\left\{\sup_{0\leq t\leq\tau_{n}}|Y_{t}|^{p}+\left(\int_{0}^{\tau_{n}}|f^{0}_{r}| dr\right)^{p}+\left(\int_{0}^{\tau_{n}}|g^{0}_{r}|^{2}dr\right)^{p/2}\right.\nonumber\\
&&\left.+\left|\int_{0}^{\tau_{n}}e^{a r}\langle Y_{r}, g(s,Y_r,Z_{r})\overleftarrow{dB}_{r}\rangle\right|^{p/2}+\left|\int_{0}^{\tau_{n}}e^{a r}\langle Y_{r}, Z_{r}dW_{r}\rangle\right|^{p/2}\right\}. \nonumber\\
\label{a2}
\end{eqnarray}

But thanks to BDG's inequality, we have
\begin{eqnarray*}
\E\left(\left|\int_{0}^{\tau_{n}}e^{a r}\langle Y_{r},Z_{r}dW_{r}\rangle
\right|^{p/2}\right)&\leq& d_{p}\E\left[ \left(\int_{0}^{\tau_{n}}
|Y_{r}|^{2}|Z_{r}|^{2}dr\right)^{p/4}\right]\nonumber \\
&\leq& \bar{C}_{p}\E\left[ \sup_{0\leq t\leq \tau_{n}}|Y_{t}|^{p/2}\left(\int_{0}^{\tau_{n}}
|Z_{r}|^{2}dr\right)^{p/4}\right]\nonumber\\
&\leq& \frac{\bar{C}_{p}^{2}}{\eta_1}\E\left(\sup_{0\leq t\leq \tau_{n}}|Y_{t}|^{p}\right)+
\eta_1\E\left(\int_{0}^{\tau_{n}}|Z_{r}|^{2}dr\right)^{p/2}.
\label{a3}
\end{eqnarray*}
and
\begin{eqnarray*}
\E\left(\left|\int_{0}^{\tau_{n}}e^{a r}\langle Y_{r}, g(s,Y_r,Z_{r})\overleftarrow{dB}_{r}\rangle\right|^{p/2}\right)&\leq& d_{p}\E\left[ \left(\int_{0}^{\tau_{n}}
|Y_{r}|^{2}|g(r,Y_r,Z_{r})|^{2}dr\right)^{p/4}\right]\nonumber \\
&\leq& \bar{C}_{p}\E\left[ \sup_{0\leq t\leq \tau_{n}}|Y_{t}|^{p/2}\left(\int_{0}^{\tau_{n}}
|g(r,Y_r,Z_{r})|^{2}dr\right)^{p/4}\right]\nonumber\\
&\leq& \frac{\bar{C}_{p}^{2}}{\eta_2}\E\left(\sup_{0\leq t\leq \tau_{n}}|Y_{t}|^{p}\right)+
\eta_2\E\left(\int_{0}^{\tau_{n}}|g(r,Y_r,Z_{r})|^{2}dr\right)^{p/2}\\
&\leq&C_p\E\left(\sup_{0\leq t\leq \tau_{n}}|Y_{t}|^{p}+\left(\int_{0}^{\tau_n}|g^{0}_r|^{2}\right)^{p/2}\right)\\
&&+(1+\eta_3)\eta_2\alpha\E\left(\int_{0}^{\tau_{n}}|Z_{r}|^{2}dr\right)^{p/2}.
\end{eqnarray*}
Let us take $\eta_1, \eta_2$ and $\eta_3$ small enough such that coming back to \eqref{a2}, we obtain, for each $n\in\N$,
\begin{eqnarray*}
\E\left[ \left( \int_{0}^{\tau_n}|Z_{r}|^{2}dr\right)
^{p/2}\right]
  &\leq &C_{p}%
\E\left\{ \sup_{0\leq t\leq T}|Y_{t}|^{p}+\left(
\int_{0}^{T}|f^{0}_{r}|dr\right)^{p}+ \left( \int_{0}^{T}|g^{0}_{r}|^{2}dr\right)
^{p/2}\right\},
\end{eqnarray*}
which by Fatou's lemma implies
\begin{eqnarray*}
\E\left[ \left( \int_{0}^{T}|Z_{r}|^{2}dr\right)
^{p/2}\right]
  &\leq &C_{p}%
\E\left\{ \sup_{0\leq t\leq T}|Y_{t}|^{p}+\left(
\int_{0}^{T}|f^{0}_{r}|dr\right)^{p}+ \left( \int_{0}^{T}|g^{0}_{r}|^{2}dr\right)
^{p/2}\right\},
\end{eqnarray*}
the desired result.
\end{proof}

We keep on this study by stating the estimate which is the main tool to derive existence and uniqueness result in our context. The difficulty
comes from the fact that $f$ is non-Lipschitz in $y$ and the function
$y\mapsto|y|^p$ is not $\mathcal{C}^{2}$ since we work with $p\in(1,2)$.
\begin{lemma}
Assume $\left(
\text{\textbf{H1}}\right)$-$\left({ \bf H2}\right)$. Let
$\left(Y,Z\right)$ be a solution of the backward doubly SDE
associated to the data $(\xi,f,g)$ where $Y$ belong to
$\mathcal{S}^{p}$. Then there exists a constant $C_{p,\lambda}$ depending
only on $p$ and $\lambda$ such that
\begin{eqnarray*}
\E\left\{\sup_{0\leq t \leq T}|Y_{t}|^{p}+
\left(\int_{0}^{T}|Z_{s}|^{2}ds\right)^{p/2}
\right\}&\leq& C_{p,\lambda}\E\left\{|\xi|^{p}+\left(\int_{0}^{T}|f^{0}_{s}|ds\right)^{p}\right.\\
&&+\left.\left(\int_{0}^{T}|g^{0}_{s}|^{2}ds\right)^{p/2}+\int_{0}^{T}|Y_s|^{p-2}{\bf 1}_{\{Y_s\neq 0\}}|g^{0}_{s}|^{2}ds\right\}.
\end{eqnarray*}
\end{lemma}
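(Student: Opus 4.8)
The plan is to apply the pathwise inequality of Corollary 2.4 to the solution $(Y,Z)$, to estimate the generator terms by the structure conditions $({\bf H2})$, to absorb the resulting occurrences of the weighted integral $\int_0^T|Y_s|^{p-2}{\bf 1}_{\{Y_s\neq0\}}|Z_s|^2\,ds$ into its left-hand side, then to pass from the two stochastic integrals to a bound on $\E\sup_t|Y_t|^p$ via the Burkholder--Davis--Gundy inequality, and finally to recover the bound on $\E(\int_0^T|Z_s|^2\,ds)^{p/2}$ from Lemma 3.1.

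Write $f^0_s=f(s,0,0)$, $g^0_s=g(s,0,0)$. Applying Corollary 2.4 with $(X,K,H,G)=(Y,f(\cdot,Y,Z),Z,g(\cdot,Y,Z))$ and using $({\bf H2})$(i)--(ii) to bound, on $\{Y_s\neq0\}$, $\langle\hat Y_s,f(s,Y_s,Z_s)\rangle\le|f^0_s|+\mu|Y_s|+\lambda|Z_s|$, using $({\bf H2})$(iii) together with $(a+b)^2\le(1+\varepsilon_1)a^2+(1+\varepsilon_1^{-1})b^2$ to bound $|g(s,Y_s,Z_s)|^2\le(1+\varepsilon_1)(\lambda|Y_s|^2+\alpha|Z_s|^2)+(1+\varepsilon_1^{-1})|g^0_s|^2$, and Young's inequality $p\lambda|Y_s|^{p-1}|Z_s|\le\tfrac{\varepsilon_2}{2}|Y_s|^{p-2}{\bf 1}_{\{Y_s\neq0\}}|Z_s|^2+\tfrac{p^2\lambda^2}{2\varepsilon_2}|Y_s|^p$ on the cross term, one sees that the coefficient of $\int_t^T|Y_s|^{p-2}{\bf 1}_{\{Y_s\neq0\}}|Z_s|^2\,ds$ is $c(p)$ on the left of Corollary 2.4 and $\tfrac{\varepsilon_2}{2}+\bar c(p)(1+\varepsilon_1)\alpha$ on the right. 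Choosing $\varepsilon_1,\varepsilon_2$ small — possible provided $\bar c(p)\alpha<c(p)$ — lets this term cross over with a strictly positive coefficient $c_1$, leaving, for each $t$,
\[
|Y_t|^p+c_1\int_t^T|Y_s|^{p-2}{\bf 1}_{\{Y_s\neq0\}}|Z_s|^2ds\le|\xi|^p+C_0\int_t^T|Y_s|^pds+p\int_t^T|Y_s|^{p-1}|f^0_s|ds+C_1\int_t^T|Y_s|^{p-2}{\bf 1}_{\{Y_s\neq0\}}|g^0_s|^2ds+R_t,
\]
where $C_0=C_0(p,\lambda,\mu)$ absorbs all $|Y_s|^p$-contributions (including $p\mu^+$), $C_1=C_1(p)$, and $R_t$ is the sum of the $dW$- and $\overleftarrow{dB}$-stochastic integrals coming from Corollary 2.4.

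Next I would localize along the stopping times $\tau_n$ of Lemma 3.1 so that these integrals are true martingales, take $\sup_{0\le t\le T}$ and then expectations. The data term is controlled by $p\int_0^T|Y_s|^{p-1}|f^0_s|ds\le p(\sup_s|Y_s|)^{p-1}\int_0^T|f^0_s|ds$, which Young's inequality with exponents $\tfrac{p}{p-1},p$ splits into $\eta\sup_s|Y_s|^p+C_\eta(\int_0^T|f^0_s|ds)^p$, while BDG gives
\[
\E\sup_{0\le t\le T}\Big|\int_t^T|Y_s|^{p-1}\langle\hat Y_s,Z_s\,dW_s\rangle\Big|\le c_p\,\E\Big(\sup_{0\le t\le T}|Y_t|^p\int_0^T|Y_s|^{p-2}{\bf 1}_{\{Y_s\neq0\}}|Z_s|^2ds\Big)^{1/2}\le\eta\,\E\sup_{0\le t\le T}|Y_t|^p+\frac{c_p^2}{4\eta}\E\int_0^T|Y_s|^{p-2}{\bf 1}_{\{Y_s\neq0\}}|Z_s|^2ds,
\]
and similarly for the $\overleftarrow{dB}$-integral, with $|g(s,Y_s,Z_s)|^2$ re-estimated by $({\bf H2})$(iii) once more. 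Taking $\eta$ small enough to re-absorb $\eta\,\E\sup_t|Y_t|^p$ and the freshly created multiple of $\E\int_0^T|Y_s|^{p-2}{\bf 1}_{\{Y_s\neq0\}}|Z_s|^2ds$ (legitimate since that integral sits on the left with coefficient $c_1>0$), and using $\E\int_0^T|Y_s|^pds\le T\,\E\sup_t|Y_t|^p$, one arrives at an estimate of the form $\E\sup_t|Y_t|^p\le(\text{right-hand side of the statement})+C_0T\,\E\sup_t|Y_t|^p$, uniformly in $n$; Fatou's lemma removes the localization. When $C_0T$ is not small one splits $[0,T]$ into finitely many subintervals $I_i$ of length $<1/(2C_0)$ and iterates this estimate backwards — the data pieces add up correctly because $\sum_i(\int_{I_i}|f^0_s|ds)^p\le(\int_0^T|f^0_s|ds)^p$ for $p\ge1$, and likewise for the other terms — at the price of a constant depending on $p,\lambda,\mu,T$. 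Finally, Lemma 3.1 bounds $\E(\int_0^T|Z_s|^2ds)^{p/2}$ in terms of $\E\sup_t|Y_t|^p$ and the data; substituting the bound just obtained for $\E\sup_t|Y_t|^p$ and adding the two yields the claim.

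The step I expect to be the real work is the simultaneous tuning of the absorption constants: in the pathwise step the $Z$-terms produced by $f$ and by $g$ must both be swallowed by $c(p)\int|Y|^{p-2}{\bf 1}|Z|^2ds$ — which is exactly where the balance $\bar c(p)\alpha<c(p)$ enters — while in the BDG step the freshly generated $\E\sup|Y|^p$ and $\E\int|Y|^{p-2}{\bf 1}|Z|^2ds$ must be re-absorbed without spoiling that first balance, and the monotone term then forces the interval-splitting argument. The term $\int_0^T|Y_s|^{p-2}{\bf 1}_{\{Y_s\neq0\}}|g^0_s|^2ds$ is genuinely irreducible: it has no analogue in the BSDE ($g\equiv0$) theory of \cite{Pal}, it cannot be estimated away because $|Y_s|^{p-2}$ may blow up near $Y_s=0$, and so it is simply carried along to the right-hand side.
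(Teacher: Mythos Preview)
Your approach is correct and close in spirit to the paper's, but differs in one substantive choice: the paper applies Corollary~2.4 with an exponential weight $e^{aps}$ and chooses $a$ large enough so that the coefficient of $\int_t^T|Y_s|^p\,ds$---namely $p\bigl[\mu+\tfrac{(3-p)(1+\varepsilon)}{2}\lambda+\tfrac{\lambda^2}{2(p-1)\gamma}-a\bigr]$---becomes nonpositive, so that term disappears outright. You instead work without the weight, carry $C_0\int_t^T|Y_s|^p\,ds$ to the right, and close by an interval-splitting/Gronwall argument. Both routes reach the same estimate; the paper's is shorter and avoids the iteration, while yours makes explicit that the constant must in fact depend on $\mu$ and $T$ as well as on $p,\lambda,\alpha$ (which is also true of the paper's proof, despite the statement). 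The balance $\bar c(p)\alpha<c(p)$ you isolate is exactly the condition $\alpha'>0$ the paper imposes.

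One point in your BDG step needs care. After taking $\sup_t$ the term $c_1\int_t^T|Y_s|^{p-2}{\bf 1}_{\{Y_s\neq0\}}|Z_s|^2ds$ is no longer ``on the left'' (its infimum over $t$ is zero), so the BDG-generated copy with coefficient $c_p^2/(4\eta)$ cannot be absorbed into it by making $\eta$ small---that makes the coefficient large. The paper resolves this by a two-pass argument: first set $t=0$ and take expectations (the stochastic integrals vanish because $Y\in\mathcal S^p$ forces $Z\in\mathcal M^p$ via Lemma~3.1, so both are uniformly integrable martingales), obtaining a standalone bound $c_1\,\E\int_0^T|Y_s|^{p-2}{\bf 1}_{\{Y_s\neq0\}}|Z_s|^2ds\le\E(X)$; \emph{then} take $\sup_t$, apply BDG, and substitute that bound. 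In your framework the first pass produces an extra $C_0\,\E\int_0^T|Y_s|^p\,ds\le C_0T\,\E\sup_t|Y_t|^p$ on the right, which simply feeds into your interval-splitting. Note also that the paper does not localize with stopping times here---it uses uniform integrability directly---which sidesteps the question of what a stopping time means for the non-monotone family $\{\mathcal F_t\}$.
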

\begin{proof}
Applying Corollary 2.1 we have, for any $a>0$ and any $0\leq t\leq u\leq T$:
\begin{eqnarray*}
&&e^{apt}|Y_{t}|^{p}+
c(p)\int_{t}^{u}e^{aps}|Y_s|^{p-2}{\bf 1}_{\{Y_s\neq\ 0\}}|Z_{s}|^{2}ds\\
&\leq& e^{apu}|Y_u|^{p}-ap\int_{t}^{u}e^{aps}|Y_s|^{p}ds+p\int_{t}^{u}e^{aps}|Y_s|^{p-1}\langle\widehat{Y}_s,f(s,Y_s,Z_s)\rangle ds\nonumber\\
&&+\bar{c}(p)\int_{t}^{u}e^{aps}|Y_s|^{p-2}{\bf 1}_{\{Y_s\neq\ 0\}}|g(s,Y_s,Z_s)|^{2}ds+p\int_{t}^{u}e^{aps}|Y_s|^{p-1}\langle \widehat{Y}_s,g(s,Y_s,Z_s)\overleftarrow{dB}_s\rangle\nonumber\\
&&-p\int_{t}^{u}e^{aps}|Y_s|^{p-1}\langle\widehat{Y}_s,Z_sdW_s\rangle.
\end{eqnarray*}
The assumption on $f$ and $g$ yields
\begin{eqnarray}
\langle \hat{y},f(s,y,z)\rangle&\leq& |f_{s}^{0}| + \mu|y| + \lambda|z|\nonumber\\
|g(s,y,z)|^{2}&\leq& (1+\varepsilon)\lambda|y|^{2}+(1+\varepsilon)\alpha|z|^{2}+(1+\frac{1}{\varepsilon})|g^{0}_s|^{2},\nonumber
\end{eqnarray}
for any arbitrary $\varepsilon>0$. Therefore for all $t\in[0,u]$, we get with probability one:
\begin{eqnarray}
&&e^{apt}|Y_{t}|^{p}+
c(p)\int_{t}^{u}e^{aps}|Y_s|^{p-2}{\bf 1}_{\{Y_s\neq\ 0\}}|Z_{s}|^{2}ds\nonumber\\
&\leq& e^{apu}|Y_u|^{p}+[p(\mu-a)+\bar{c}(p)(1+\varepsilon)\lambda]\int_{t}^{u}e^{aps}|Y_s|^{p}ds\nonumber\\
&&+p\int_{t}^{u}e^{aps}|Y_s|^{p-1}|f^{0}_s|ds+\bar{c}(p)(1+\varepsilon^{-1})\int_{t}^{u}e^{aps}|Y_s|^{p-2}{\bf 1}_{\{Y_s\neq\ 0\}}|g_{s}^{0}|^{2}ds\nonumber\\
&&+\bar{c}(p)(1+\varepsilon)\alpha\int_{t}^{u}e^{aps}|Y_s|^{p-2}{\bf 1}_{\{Y_s\neq\ 0\}}|Z_s|^{2}ds+p\lambda\int_{t}^{u}e^{aps}|Y_s|^{p-1}|Z_s|ds
\nonumber\\
&&+p\int_{t}^{u}e^{aps}|Y_s|^{p-1}\langle \widehat{Y}_s,g(s,Y_s,Z_s)\overleftarrow{dB}_s\rangle-p\int_{t}^{u}e^{aps}|Y_s|^{p-1}\langle \widehat{Y}_s,Z_sdW_s\rangle \label{Col} .
\end{eqnarray}
We deduce from the previous inequality that, $\P$-a.s.,
\begin{eqnarray*}
\int_{0}^{T}e^{aps}|Y_s|^{p-2}{\bf 1}_{\{Y_s\neq\ 0\}}|Z_{s}|^{2}ds<\infty.
\end{eqnarray*}
Moreover, we have
\begin{eqnarray*}
p\lambda|Y_s|^{p-1}|Z_s|\leq \gamma^{-1}\frac{p\lambda^{2}}{2(p-1)}|Y_s|^{p}+\gamma c(p)|Y_s|^{p-1}{\bf 1}_{\{Y_s\neq 0\}}|Z_s|^{2},
\end{eqnarray*}
for any arbitrary $\gamma>0$.

Next for $\gamma$  and $\varepsilon$ small enough and $a$ large enough such that $\alpha'=[(1-\gamma)c(p)-(1+\varepsilon)\alpha\bar{c}(p)]>0$ and $c(a,\varepsilon,\gamma)=p[\mu+\frac{(3-p)(1+\varepsilon)}{2}\lambda+\frac{\gamma^{-1}}{2(p-1)}\lambda^{2}-a]\leq 0$, we have
\begin{eqnarray}
&&|Y_t|^p+\alpha'\int_{t}^{T}e^{aps}|Y_s|^{p-2}{\bf 1}_{\{Y_s\neq\ 0\}}|Z_{s}|^{2}ds\nonumber\\
&\leq& e^{apT}|\xi|^{p}+p\int_{t}^{T}e^{aps}|Y_s|^{p-1}|f^{0}_s|ds+\bar{c}(p)(1+\varepsilon^{-1})\int_{t}^{T}e^{aps}|Y_s|^{p-2}{\bf 1}_{\{Y_s\neq\ 0\}}|g_{s}^{0}|^{2}ds\nonumber\\
&&+p\int_{t}^{T}e^{aps}|Y_s|^{p-1}\langle \widehat{Y}_s,g(s,Y_s,Z_s)\overleftarrow{dB}_s\rangle
-p\int_{t}^{T}e^{aps}|Y_s|^{p-1}\langle \widehat{Y}_s,Z_sdW_s\rangle\nonumber\\
&\leq & X+p\int_{t}^{T}e^{aps}|Y_s|^{p-1}\langle \widehat{Y}_s,g(s,Y_s,Z_s)\overleftarrow{dB}_s\rangle-p\int_{t}^{T}e^{aps}|Y_s|^{p-1}\langle \widehat{Y}_s,Z_sdW_s\rangle,\label{estYZ3}
\end{eqnarray}
where
\begin{eqnarray*}
X &=&e^{apT}|\xi|^{p}+p\int_{0}^{T}e^{aps}|Y_s|^{p-1}|f^{0}_s|ds+c(p)(1+\varepsilon^{-1})\int_{0}^{T}e^{aps}|Y_s|^{p-2}{\bf 1}_{\{Y_s\neq\ 0\}}|g_{s}^{0}|^{2}ds.
\end{eqnarray*}
On can show that $M_t=\int_{t}^{T}e^{aps}|Y_s|^{p-1}\langle \widehat{Y}_s,g(s,Y_s,Z_s)\overleftarrow{dB}_s\rangle$
 and $N_t=\int_{t}^{T}e^{aps}|Y_s|^{p-1}\langle \widehat{Y}_s,Z_sdW_s\rangle$ are uniformly integrable martingale. Indeed, using BDG inequality and then Young's inequality we have,
\begin{eqnarray}
\E\langle M,M\rangle_T^{1/2}&\leq& \E\left[\sup_{0\leq t\leq T}|Y_t|^{p-1}\left(\int_{0}^{T}|Z_r|^{2}dr\right)^{1/2}\right]\nonumber\\
&\leq&\frac{p-1}{p}\E\left(\sup_{0\leq t\leq T}|Y_t|^{p}\right)+\frac{1}{p}\E\left[\left(\int_{0}^{T}|Z_r|^{2}dr\right)^{p/2}\right]\label{finite1}
\end{eqnarray}
and
\begin{eqnarray}
\E\langle N,N\rangle_T^{1/2}&\leq& \E\left[\sup_{0\leq t\leq T}|Y_t|^{p-1}\left(\int_{0}^{T}|g(r,Y_r,Z_r)|^{2}dr\right)^{1/2}\right]\nonumber\\
&\leq&\frac{p-1}{p}\E\left(\sup_{0\leq t\leq T}|Y_t|^{p}\right)+\frac{1}{p}\E\left[\left(\int_{0}^{T}|g(r,Y_r,Z_r)|^{2}dr\right)^{p/2}\right].\label{finite2}
\end{eqnarray}
The last term of \eqref{finite1} and \eqref{finite2} being finite since $Y$ and $g(.,Y,Z)$ belong to $\mathcal{S}^p$ and $\mathcal{M}^p$ respectively, and then $Z$ belongs to $\mathcal{M}^p$ by Lemma 3.1.

Return to \eqref{estYZ3}, we get both
\begin{eqnarray}
\E\left(e^{apt}|Y_{t}|^{p}\right)&\leq& \E(X),\nonumber\\
\alpha'\E\int_{0}^{T}e^{aps}|Y_s|^{p-2}{\bf 1}_{\{Y_s\neq\ 0\}}|Z_{s}|^{2}ds&\leq &C_p\E(X)\label{estZY1},\\
\E\left(\sup_{0\leq t\leq T}e^{apt}|Y_{t}|^{p}\right)&\leq& \E(X)+k_p\E\langle N,N\rangle_T^{1/2}+h_p\E\langle M,M\rangle_T^{1/2}.\nonumber
\end{eqnarray}
On the other hand, we also have
\begin{eqnarray*}
k_p\E\langle M,M\rangle_T^{1/2}&\leq&\frac{1}{4}\E\left(\sup_{0\leq t\leq T}e^{apt}|Y_{t}|^{p}\right)+4h^{2}_p\E\int_{0}^{T}e^{aps}|Y_s|^{p-2}{\bf 1}_{\{Y_s\neq\ 0\}}|Z_{s}|^{2}ds
\end{eqnarray*}
and
\begin{eqnarray*}
h_p\E\langle N,N\rangle_T^{1/2}&\leq&\frac{1}{4}\E\left(\sup_{0\leq t\leq T}e^{apt}|Y_{t}|^{p}\right)+4k^{2}_p\E\int_{0}^{T}e^{aps}|Y_s|^{p-2}{\bf 1}_{\{Y_s\neq\ 0\}}|Z_{s}|^{2}ds\\
&&+d_p\E\left(\int^T_0e^{aps}|Y_s|^{p}ds+\int_{0}^{T}e^{aps}|Y_s|^{p-2}{\bf 1}_{\{Y_s\neq\ 0\}}|g_{s}^{0}|^{2}ds\right).
\end{eqnarray*}
Therefore from \eqref{estZY1}, we obtain
\begin{eqnarray*}
\E\left(\sup_{0\leq t\leq T}e^{apt}|Y_{t}|^{p}\right)\leq C_p\E(X).
\end{eqnarray*}
Applying once again Young's inequality, we get

\begin{eqnarray*}
p C_p\int_{0}^{T}e^{aps}|Y_s|^{p-1}|f^{0}_s|ds&\leq &\frac{1}{2}\sup_{0\leq s\leq T}|Y_s|^{p}+C'_p\left(\int_{0}^{T}e^{aps}|f^{0}_s|ds\right)^{p}
\end{eqnarray*}
from which we deduce, in view of $X$, that
\begin{eqnarray*}
\E\left(\sup_{0\leq t\leq T}e^{apt}|Y_{t}|^{p}\right)\leq C_p\E\left[|\xi|^{p}+\left(\int_{0}^{T}e^{aps}|f^{0}_s|ds\right)^{p}
+\int_{0}^{T}e^{aps}|Y_s|^{p-2}{\bf 1}_{\{Y_s\neq\ 0\}}|g^{0}_{s}|^{2}ds\right].
\end{eqnarray*}
The result follows from Lemma 3.1.
\end{proof}

\section{Existence and uniqueness of a solution}
\setcounter{theorem}{0} \setcounter{equation}{0}
This section is devoted to derive existence and uniqueness result to BDSDE $(\xi,f,g)$ in
$L^{p}$-sense, ($p\in(1,2)$). Above a priori estimates and $L^{\infty}$-approximation is used under $({\bf H1})$-$({\bf H2})$ and the the following additional assumptions.
\begin{description}
\item $({\bf H3})$ For $p>1$,\\
$
\left\{
\begin{array}{l}
(i)\;\E\left[|\xi|^{p}\right]<\infty,\\\\
(ii)\;\P\, a.s.\; \forall\, (t,z)\in[0,T]\times\R^{d\times d},\; y\mapsto f(t,y,z)\, \mbox{is continuous},\\\\
(iii)\; g(.,0,0)\equiv 0,\\\\
(iv)\;\forall\, r>0, \; \psi_r(t)=\sup_{|y|<r}|f(t,y,0)-f_{t}^{0}|\in L^{1}([0,T]\time\Omega,dt\otimes\P).
\end{array}\right.
$
\end{description}

Firstly, we generalize the result of Pardoux and Peng (see Theorem 1.1, \cite{PP1}) to monotone case. To do this, let assume this assumption which appear in \cite{P}.
\begin{description}
\item ({\bf H4})\;$\P-\, a.s.\; \forall\, (t,y)\in[0,T]\times\R^{k},\, |f(t,y,0)|\leq |f(t,0,0)|+\varphi(|y|)$.
\end{description}

\begin{theorem}
Let $p=2$. Under assumptions $({\bf H1)}$-$({\bf H4)}$, BDSDE $(\ref{a1})$ has a
unique solution in $\mathcal{S}^{2}(\R^d) \times \mathcal{M}^2(\R^{d\times d})$.
\end{theorem}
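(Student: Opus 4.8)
The plan is to prove existence and uniqueness of an $\mathcal{S}^2 \times \mathcal{M}^2$-solution for the monotone generator by an approximation argument that reduces the problem to the Lipschitz BDSDE case of Pardoux--Peng \cite{PP1}. First I would address \emph{uniqueness}: given two solutions $(Y^1,Z^1)$ and $(Y^2,Z^2)$, set $\delta Y = Y^1-Y^2$, $\delta Z = Z^1-Z^2$, and apply It\^o's formula to $e^{as}|\delta Y_s|^2$ (equivalently, the Tanaka-type identity of Corollary~2.1 with $p=2$, where $L\equiv 0$). Using the monotonicity ({\bf H2})(ii) in $y$, the Lipschitz property ({\bf H2})(i) in $z$, and ({\bf H2})(iii) for $g$ together with $\alpha<1$, the cross terms are absorbed exactly as in the a priori estimate of Lemma~3.2; choosing $a$ large enough forces $\E\sup_t|\delta Y_t|^2 = 0$ and $\E\int_0^T|\delta Z_s|^2ds = 0$. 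This is essentially the computation already carried out in the lemmas of Section~3 specialized to $p=2$.

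For \emph{existence}, I would truncate the generator in $y$. Define, for each $n\ge 1$,
\begin{eqnarray*}
f_n(t,y,z) = f\!\left(t,\, \theta_n(y)\, y,\, z\right) - f(t,\theta_n(y)y,0) + f(t, q_n(y), 0),
\end{eqnarray*}
or more simply use the standard monotone-to-Lipschitz regularization $f_n(t,y,z)=\big(f(t,\cdot,z)\big)\!\star\!\rho_n$ composed with a smooth truncation, so that each $f_n$ is globally Lipschitz in $(y,z)$, satisfies ({\bf H2}) with constants uniform in $n$ (the monotonicity constant $\mu$ is preserved by inf-convolution), and $f_n(t,\cdot,z)\to f(t,\cdot,z)$ locally uniformly by ({\bf H3})(ii) and ({\bf H4}). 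By \cite{PP1} (Theorem~1.1), for each $n$ there is a unique solution $(Y^n,Z^n)\in\mathcal{S}^2\times\mathcal{M}^2$ to the BDSDE with data $(\xi,f_n,g)$. The a priori estimate of Lemma~3.2 (with $p=2$) applied uniformly in $n$ — using ({\bf H3})(iii), i.e. $g^0\equiv 0$, which makes the last two terms on the right-hand side vanish — gives
\begin{eqnarray*}
\sup_n \E\left\{ \sup_{0\le t\le T}|Y^n_t|^2 + \int_0^T|Z^n_s|^2 ds \right\} \le C\,\E\left\{|\xi|^2 + \left(\int_0^T|f^0_s|ds\right)^2\right\} < \infty.
\end{eqnarray*}

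The main work is then to show $(Y^n,Z^n)$ is Cauchy. For $n\ge m$, writing $\delta Y = Y^n - Y^m$ and applying It\^o to $e^{as}|\delta Y_s|^2$, the generator difference splits as
\begin{eqnarray*}
\langle \delta Y_s,\, f_n(s,Y^n_s,Z^n_s) - f_m(s,Y^m_s,Z^m_s)\rangle
\le \mu|\delta Y_s|^2 + \lambda|\delta Y_s||\delta Z_s| + |\delta Y_s|\,\big|f_n(s,Y^m_s,Z^m_s) - f_m(s,Y^m_s,Z^m_s)\big|,
\end{eqnarray*}
where the monotonicity absorbs the $Y^n$-dependence. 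Combined with ({\bf H2})(iii) and $\alpha<1$, choosing $a$ large and applying BDG as in Lemma~3.2, we reduce to controlling $\E\int_0^T|f_n(s,Y^m_s,Z^m_s) - f(s,Y^m_s,Z^m_s)|\,ds$ and similarly with $f_m$; this goes to $0$ by ({\bf H3})(ii)--(iv) (local uniform convergence, the bound ({\bf H3})(iv) on $\psi_r$, and the uniform $\mathcal{S}^2$-bound on $Y^m$ giving tightness, via dominated convergence after a localization over $\{|Y^m_s|\le r\}$ and a uniform-integrability argument for the complement). The hard part is precisely this passage to the limit in the non-Lipschitz term: one must exploit ({\bf H3})(iv) together with the uniform a priori bound to dominate $\psi_{R}(s)\mathbf{1}_{\{\sup_n|Y^n_s|>r\}}$ and send $r\to\infty$. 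Once $(Y^n,Z^n)\to(Y,Z)$ in $\mathcal{S}^2\times\mathcal{M}^2$, the continuity of $f$ in $y$, the Lipschitz continuity in $z$, and ({\bf H2})(iii) let us pass to the limit in each term of the BDSDE (the backward Kunita--It\^o integral and the forward It\^o integral converge in $L^2$ because $g(\cdot,Y^n,Z^n)\to g(\cdot,Y,Z)$ and $Z^n\to Z$ in $\mathcal{M}^2$), so $(Y,Z)$ solves \eqref{a1}. Uniqueness then identifies the limit independently of the approximating sequence.
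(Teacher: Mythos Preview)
Your uniqueness argument is essentially the paper's: It\^o on $|\delta Y|^2$, monotonicity plus Lipschitz-in-$z$ plus ({\bf H2})(iii) with $\alpha<1$, then absorb and conclude. Fine.

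For existence, however, you and the paper take genuinely different routes. The paper does \emph{not} approximate $f$ by Lipschitz generators. Instead it proceeds in two layers: first it invokes a proposition (Proposition~4.2, proved as in Pardoux's lecture notes \cite{P}) asserting that for any \emph{frozen} $V\in\mathcal{M}^2$ the BDSDE with generator $(s,y,z)\mapsto f(s,y,V_s)$ and diffusion $g(s,y,z)$ has a unique $L^2$ solution; then it defines the map $\Phi(U,V)=(Y,Z)$ and shows, via It\^o on $e^{\gamma s}|\overline{Y}_s|^2$ and the choice $\gamma=\lambda^2/\varepsilon+\lambda+1-\alpha$, that $\Phi$ is a strict contraction on $\mathcal{S}^2\times\mathcal{M}^2$ equipped with an exponentially weighted norm. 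The fixed point solves the BDSDE. Thus the paper decouples the monotone-in-$y$ difficulty (pushed into Proposition~4.2, where there is no $Z$-dependence in $f$) from the Lipschitz-in-$z$ structure (handled by contraction).

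Your approach---regularize $f$ in $y$, apply Pardoux--Peng's Lipschitz theorem directly, get uniform bounds from Lemma~3.2, then prove Cauchy---is the Darling--Pardoux style alternative and is in principle valid, but you should be aware of two soft spots. First, your initial formula for $f_n$ does not actually yield a Lipschitz-in-$y$ function (composing $f(t,\cdot,0)$ with the projection $q_n$ is only Lipschitz if $f(t,\cdot,0)$ already is on $B_n$, which is not assumed); you need the inf-convolution/mollification route you mention second, and you should check carefully that it preserves the one-sided condition ({\bf H2})(ii) with a uniform $\mu$ in the multidimensional setting. Second, the Cauchy step requires controlling $\E\int_0^T|\delta Y_s|\,|f_n-f_m|(s,Y^m_s,Z^m_s)\,ds$ with $Z^m_s$ unbounded; since $f$ is Lipschitz in $z$, the difference $f_n-f_m$ (built by regularizing only in $y$) is indeed independent of $z$, but you should make that explicit rather than leave it buried. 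The paper's contraction argument sidesteps both issues entirely at the cost of importing Proposition~4.2 as a black box.
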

\begin{proof}
It follows by combining argument of Pardoux (see Theorem 2.2 \cite{P}) with one used in Pardoux and Peng (see Theorem 1.1, \cite{PP1}).
Therefore, we will give the main line.\newline
{\bf Uniqueness}\newline Let $(Y,Z)$ and $(Y',Z')$ be two solutions of BDSDE $(\xi,f,g)$ verify above assumptions. It follows from Itô formula that
\begin{eqnarray*}
&&\E(|Y_t-Y'_t|^2)+\E\left(\int_{t}^{T}|Z_s-Z'_s|^2ds\right)\\&=&2\E\left(\int_{t}^{T}\langle Y_s-Y'_s,f(s,Y,Z_s)-f(s,Y'_s,Z'_s)\rangle ds\right)+\E\left(\int_{t}^{T}|g(s,Y_s,Z_s)-g(s,Y'_s,Z'_s)|^2ds\right)\\
&\leq& \E\left(\int_{t}^{T}[(2\mu+\lambda)|Y_s-Y'_s|^2+2\lambda|Y_s-Y'_s||Z_s-Z'_s|+\alpha|Z_s-Z'_s|^2]ds\right)\\
&\leq& (2\mu+\lambda+\gamma^{-1}\lambda^2)\E\left(\int_{t}^{T}|Y_s-Y'_s|^2ds\right)+(\alpha+\gamma)\E\left(\int_{t}^T|Z_s-Z'_s|^2 ds\right).
\end{eqnarray*}
Hence, taking $\gamma$ small enough such that $1-\alpha-\gamma>0$, we have
\begin{eqnarray*}
\E(|Y_t-Y'_t|^2)&\leq& C\E\left(\int_{t}^{T}|Y_s-Y'_s|^2ds\right),
\end{eqnarray*}
which provide with Gronwall's lemma that $\E(|Y_t-Y'_t|^2)=0,\,$ for all $t\in[0,T]$, and then $\displaystyle{\E\left(\int_{t}^{T}|Z_s-Z'_s|^2ds\right)=0}$.

{\bf Existence}\newline
Firstly, let state this result which is proved similarly as Proposition 2.4 (see \cite{P}) with additional computations due
to backward stochastic integral with respect Brownian motion $B$, so we omit it.
\begin{proposition}\label{Prop}
Given $V\in\mathcal{M}^2(0,T,\R^{d\times d})$ and assume $({\bf H1})$-$({\bf H4})$, there exists a unique measurable processes $(Y_t,Z_t)_{\{0\leq t\leq T\}}$
with values in $\R^{d}\times\R^{d\times d}$  satisfies
\begin{eqnarray*}
Y_t=\xi+\int_{t}^{T}f(s,Y_s,V_s)ds+\int_t^Tg(s,Y_s,Z_s)\overleftarrow{dB}_s-\int_t^TZ_sdW_s,\;\;0\leq t\leq T.
\end{eqnarray*}
\end{proposition}
Using Proposition \eqref{Prop}, we consider $\Phi:\mathcal{S}^2(\R^{d})\times\mathcal{M}^2(0,T,\R^{d\times d})\rightarrow
\mathcal{S}^2(\R^{d})\times\mathcal{M}^2(0,T,\R^{d\times d})$ defined by $(Y,Z)=\Phi(U,V)$ is the unique solution of the BDSDE
\begin{eqnarray*}
Y_t=\xi+\int_{t}^{T}f(s,Y_s,V_s)ds+\int_t^Tg(s,Y_s,Z_s)\overleftarrow{dB}_s-\int_t^TZ_sdW_s,\;\;0\leq t\leq T.
\end{eqnarray*}
Let $(U,V),\;(U',V')$ belong in $\mathcal{S}^2(\R^{d})\times\mathcal{M}^2(0,T,\R^{d\times d}),\; (Y,Z)=\Phi(U,V)$ and $(Y',Z')=\Phi(U',V')$. Setting
$(\overline{U},\overline{V})=(U-U',V-V')$ and $(\overline{Y},\overline{Z})=(Y-Y',Z-Z')$, it follows from Itô formula that for $\gamma\in\R$,
\begin{eqnarray*}
&&e^{\gamma t}\E|\overline{Y}_t|^2+\E\int_t^T
e^{\gamma s} (\gamma\left|\overline{Y}_s\right|^2 +|\overline{Z}_s|^2)ds\\
&=&2\E\int_t^T\langle \overline{Y}_s, f(s,Y_s,V_s)-f(s,Y',V')\rangle ds+\E\int_t^T |g(s,Y_s,Z_s)-g(s,Y',Z')|^2ds\\
&\leq&\E\int_t^T(2\mu+\frac{\lambda^2}{\varepsilon}+\lambda)|\overline{Y}_s|^2+\varepsilon|\overline{V}_s|^2+\alpha|\overline{Z}_s|^2)ds.
\end{eqnarray*}
Hence, if we choose $\gamma=\frac{\lambda^2}{\varepsilon}+\lambda+1-\alpha$, we have
\begin{eqnarray*}
\E\int_t^Te^{\gamma s} (\left|\overline{Y}_s\right|^2 +|\overline{Z}_s|^2)ds&\leq&
\frac{\varepsilon}{1-\alpha}\left(\E\int_t^T e^{\gamma s}(\left|\overline{U}_s\right|^2+\left|\overline{V}_s\right|^2)ds\right)
\end{eqnarray*}
Taken $\varepsilon<1-\alpha$, $\Phi$ is a strict contraction on
$\mathcal{S}^{2}(\R^d)\times \mathcal{M}^{2}((0,T);\R^{d\times d})$
equipped with the norm
\begin{eqnarray*}
\|(Y,Z)\|^{2}=\E\int_t^T
e^{\gamma s}\left( \left|Y_s\right|^2+|Z_s|^2\right)ds.
\end{eqnarray*}
Its unique fixed point solves BDSDE $(\xi,f,g)$ in $\mathcal{S}^{2}(\R^d) \times \mathcal{M}^2(\R^{d\times d})$.
\end{proof}

We are now ready to state our main result in this paper which is the existence and uniqueness of the solution to BDSDEs \eqref{a1} in $L^p$-sense.
\begin{theorem}
Let $p\in (1,2)$. Under  assumptions $({\bf H1)}$-$({\bf H3)}$, BDSDE $(\ref{a1})$ has a
unique solution in $\mathcal{S}^{p}(\R^d) \times \mathcal{M}^p(\R^{d\times d})$.
\end{theorem}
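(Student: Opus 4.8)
The plan is to adapt, to the doubly stochastic setting, the scheme of Briand, Delyon, Hu, Pardoux and Stoica \cite{Pal} for $L^{p}$ solutions of monotone BSDEs: the two engines are the generalized Tanaka formula (Lemma 2.2 and Corollary 2.1) and the a priori estimates of Section 3, combined with an $L^{\infty}$-truncation of the data and the Lipschitz result of Theorem 4.1.

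\textbf{Uniqueness.} Given two solutions $(Y,Z),(Y',Z')\in\mathcal S^{p}\times\mathcal M^{p}$, I apply Corollary 2.1 to $(\bar Y,\bar Z)=(Y-Y',Z-Z')$, whose drift is $f(s,Y_s,Z_s)-f(s,Y'_s,Z'_s)$ and whose $\overleftarrow{dB}$-integrand is $g(s,Y_s,Z_s)-g(s,Y'_s,Z'_s)$. Splitting the drift difference and invoking the monotonicity (H2)(ii) and the $z$-Lipschitz property (H2)(i) gives $\langle\widehat{\bar Y}_s,f(s,Y_s,Z_s)-f(s,Y'_s,Z'_s)\rangle\le\mu|\bar Y_s|+\lambda|\bar Z_s|$, while (H2)(iii) gives $|g(s,Y_s,Z_s)-g(s,Y'_s,Z'_s)|^{2}\le\lambda|\bar Y_s|^{2}+\alpha|\bar Z_s|^{2}$. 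Inserting these in the inequality of Corollary 2.1 weighted by $e^{aps}$, absorbing the cross term via $p\lambda|\bar Y_s|^{p-1}|\bar Z_s|\le\gamma c(p)|\bar Y_s|^{p-2}{\bf 1}_{\{\bar Y_s\neq0\}}|\bar Z_s|^{2}+C_{\gamma}|\bar Y_s|^{p}$, and choosing $\gamma,\varepsilon$ small and $a$ large exactly as in Lemma 3.2, the $|\bar Z|^{2}$-terms carry a nonnegative net coefficient and the $|\bar Y_s|^{p}$-terms are killed. Taking expectations (the stochastic integrals are uniformly integrable martingales, by the same computation as in Lemma 3.2) yields $\E(e^{apt}|\bar Y_t|^{p})\le0$, so $\bar Y\equiv0$ and then $\bar Z\equiv0$.

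\textbf{Existence.} Since $g(\cdot,0,0)\equiv0$ by (H3)(iii), only $\xi$ and $f^{0}=f(\cdot,0,0)$ need truncating. Set $\xi_n=\xi{\bf 1}_{\{|\xi|\le n\}}$ and $f_n(t,y,z)=f(t,y,z)-f^{0}_t+f^{0}_t{\bf 1}_{\{|f^{0}_t|\le n\}}$; this preserves (H1)--(H3) (the modulus $\psi_r$ of (H3)(iv) is unchanged, since the added term is independent of $(y,z)$) and makes $\xi_n,f_n(\cdot,0,0)$ bounded. By Theorem 4.1 (after, if necessary, an auxiliary Lipschitz regularization of $f_n$ in the variable $y$ and a passage to the limit via monotonicity, as in the proof of that theorem) the BDSDE with data $(\xi_n,f_n,g)$ has a solution $(Y^{n},Z^{n})$, which by Lemma 3.2 lies in $\mathcal S^{p}\times\mathcal M^{p}$ with norm bounded uniformly in $n$. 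For $n\ge m$, the pair $(Y^{n}-Y^{m},Z^{n}-Z^{m})$ solves the BDSDE with terminal value $\xi_n-\xi_m$, drift $[f_n(s,Y^{n}_s,Z^{n}_s)-f_n(s,Y^{m}_s,Z^{m}_s)]+f^{0}_s{\bf 1}_{\{m<|f^{0}_s|\le n\}}$ and $\overleftarrow{dB}$-integrand $g(s,Y^{n}_s,Z^{n}_s)-g(s,Y^{m}_s,Z^{m}_s)$; treating the bracketed term by monotonicity and (H2)(i) and the $g$-term by (H2)(iii), Lemma 3.2 applies with $f^{0}$ replaced by the tail term and $g^{0}$ by $0$, giving
\[
\|Y^{n}-Y^{m}\|_{\mathcal S^{p}}^{p}+\|Z^{n}-Z^{m}\|_{\mathcal M^{p}}^{p}\le C_{p,\lambda}\,\E\Big[|\xi_n-\xi_m|^{p}+\Big(\int_{0}^{T}|f^{0}_s|{\bf 1}_{\{m<|f^{0}_s|\le n\}}\,ds\Big)^{p}\Big]\longrightarrow 0
\]
by dominated convergence, since $\xi\in L^{p}$ and $f^{0}\in\mathcal M^{p}$. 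Hence $(Y^{n},Z^{n})\to(Y,Z)$ in $\mathcal S^{p}\times\mathcal M^{p}$. Along a subsequence the convergence is a.s. and $dt\otimes d\P$-a.e., and one passes to the limit in the BDSDE $(\xi_n,f_n,g)$: the $\overleftarrow{dB}$- and $dW$-integrals converge thanks to (H2)(iii) and the $\mathcal M^{p}$-convergence of $(Y^{n},Z^{n})$, while $\int_t^{T}f_n(s,Y^{n}_s,Z^{n}_s)\,ds\to\int_t^{T}f(s,Y_s,Z_s)\,ds$ by the continuity (H3)(ii), the $z$-Lipschitz bound, and the domination furnished by (H3)(iv). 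Thus $(Y,Z)$ solves $(\xi,f,g)$ in $\mathcal S^{p}\times\mathcal M^{p}$, and uniqueness has already been shown.

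\textbf{Main difficulty.} The crux is the two $L^{p}$ stability estimates (for the difference of two solutions, and for $(Y^{n}-Y^{m},Z^{n}-Z^{m})$): because $y\mapsto|y|^{p}$ is not $\mathcal C^{2}$ for $p\in(1,2)$, everything must be routed through the generalized It\^o--Tanaka formula of Lemma 2.2, with careful bookkeeping of the coefficients $c(p)$ and $\bar c(p)$ when the $|Z|$-cross term and the $g$-contribution are absorbed --- this is precisely the smallness balance between $\gamma$, $\varepsilon$ and $\alpha$ already met in Lemma 3.2 --- and one must verify that the relevant stochastic integrals are genuine, not merely local, martingales before taking expectations. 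The secondary technical point is the passage to the limit in the non-Lipschitz drift, which hinges on the continuity of $f$ in $y$ together with the integrable modulus (H3)(iv) through a dominated convergence argument.
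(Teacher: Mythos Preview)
Your uniqueness argument is correct and coincides with the paper's (the difference solves a BDSDE to which Lemma~3.2 applies with vanishing $f^{0},g^{0}$). The existence argument also has the right global shape---truncate the data, invoke the $L^{2}$ theory, prove Cauchy in $\mathcal S^{p}\times\mathcal M^{p}$ via Lemma~3.2---but there is a genuine gap at the point where you call Theorem~4.1 on $(\xi_n,f_n,g)$. That theorem is proved under (H1)--(H4), and your $f_n$ need not satisfy the growth hypothesis (H4): assumption (H3)(iv) only yields a \emph{local} $L^{1}$ modulus $\psi_r(t)=\sup_{|y|<r}|f(t,y,0)-f^{0}_t|$, not a global bound of the form $|f(t,y,0)|\le|f^{0}_t|+\varphi(|y|)$. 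Your parenthetical ``auxiliary Lipschitz regularization of $f_n$ in $y$ \dots\ as in the proof of that theorem'' does not close this, because the proof of Theorem~4.1 contains no such regularization---it simply assumes (H4) and runs a contraction argument.

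The paper inserts an intermediate Step~1, for \emph{bounded} data, to bridge this. One fixes $r$ with $e^{(1+\lambda^{2})T}(\|\xi\|_\infty+T\|f^{0}\|_\infty)<r$ and sets
\[
h_n(t,y,z)=\theta_r(y)\bigl(f(t,y,q_n(z))-f^{0}_t\bigr)\frac{n}{\psi_{r+1}(t)\vee n}+f^{0}_t,
\]
with a smooth cutoff $\theta_r$ supported in $\{|y|\le r+1\}$. Now $h_n$ is globally bounded, so (H4) is trivial and Theorem~4.1 applies to $(\xi,h_n,g)$. The essential extra ingredient you are missing is an a~priori $L^{\infty}$ bound $\|Y^{n}\|_\infty\le r$ (borrowed from \cite{BC,Bal}), which removes the cutoff $\theta_r$ a~posteriori and shows $(Y^{n},Z^{n})$ actually solves the BDSDE with drift $f_n(t,y,z)=(f(t,y,q_n(z))-f^{0}_t)\frac{n}{\psi_{r+1}(t)\vee n}+f^{0}_t$; Cauchyness in $\mathcal S^{2}\times\mathcal M^{2}$ then follows from (H3)(iv), which dominates $|f_{n+i}-f_n|$. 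Only once this bounded-data solvability is established does the $L^{p}$ truncation step (your argument, which matches the paper's Step~2 with $q_n$ in place of indicator truncation) go through.
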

\begin{proof}
{\bf Uniqueness}
\newline
Let us consider $(Y,Z)$ and $(Y',Z')$ two solutions of BDSDE $\left(\xi,f,g \right)$. Setting
$\overline{Y}=Y - Y'$ and $\overline{Z}=Z - Z'$, then the process $(\overline{Y}, \overline{Z})$ solves BDSDE
\begin{eqnarray*}
\overline{Y}_t=\int_{t}^{T}\varphi(s,\overline{Y}_s,\overline{Z}_s)ds+\int_{t}^{T}\psi(s,\overline{Y}_s,\overline{Z}_s)\overleftarrow{dB}_s
-\int_{t}^{T}\overline{Z}_sdW_s,\;\; 0\leq t\leq T,
\end{eqnarray*}
where $$\varphi(s,y,z)=f(s,y+Y'_s,z+Z'_s)-f(s,Y'_s,Z'_s)\;\;\mbox{and}\;\;\psi(s,y,z)=g(s,y+Y'_s,z+Z'_s)-g(s,Y'_s,Z'_s).$$
Thanks to $({\bf H2})$, the process $(\overline{Y},\overline{Z})$ satisfies Lemma 3.2 with $\varphi^0=\psi^0=0$. Thus, $(\overline{Y},\overline{Z})=
(0, 0)$ immediately.

{\bf Existence}\newline It split in two steps.\newline
{\bf Step 1.} For positive real $r$, we suppose $\xi,\, \sup |f^{0}_{t}|$ are bounded random variables such that
\begin{eqnarray*}
e^{(1+\lambda^{2})T}(|\xi|+T\|f^{0}
\|_{\infty}) &<& r.
\end{eqnarray*}
For such $r$, we define $\theta_{r}$ a smooth function such that $0\leq
\theta_{r}\leq 1$ and
\begin{eqnarray*}
\theta_{r}(y)=\left\{ \begin{array}{l}
1\text{ for }|y|\leq r \\
\\
0\text{ for }|y|\geq r+1.
\end{array}
\right.
\end{eqnarray*}
For each $n\in\N^{*}$, we denote
\begin{eqnarray*}
h_{n}(t,y,z)&=&\theta_{r}(y)(f(t,y,q_{n}(z))-f_{t}^{0})\frac{n}{\pi_{r+1}
(t)\vee n}+f_{t}^{0},
\end{eqnarray*}
where  $q_{n}(z)=z\frac{n}{|z|\vee n }$.

For each $n\in\N$, as in \cite{Pal}, $(\xi,h_{n},g)$ satisfies assumptions $({\bf H1)}$-$({\bf H4)}$ with $\mu$ positive. Then BDSDE $(\xi,h_n,g)$ has a unique process solution $(Y^n, Z^n)$ in the space $\mathcal{S}^{2}(\R^d) \times \mathcal{M}^2(\R^{d\times d})$. Moreover, it follows from  \cite{BC} (see, Lemma 2.2) and \cite{Bal} (see, remark 2.2) that $\|Y^{n}\|_{\infty}\leq r$. In addition, using Lemma 3.1,
\begin{eqnarray}
\|Z^{n}\|_{\mathcal{M}^{2}}\leq r',\label{anotherest}
\end{eqnarray}
where $r'$ is another constant depending on $r$.

As a byproduct $(Y^{n},Z^{n})$ is solution to BDSDE $(\xi,f_{n},g)$ where
\begin{eqnarray*}
f_{n}(t,y,z)=(f(t,y,q_{n}(z))-f_{t}^{0})\frac{n}{\pi_{r+1}
(t)\vee n}+f_{t}^{0}.
\end{eqnarray*}
For each $i\in\N$, let set
$\begin{array}{l}
\bar{Y}^{n,i}=Y^{n+i}-Y^{n},\ \bar{Z}^{n,i}=Z^{n+i}-Z^{n}.\end{array}
$
Then, we have, using assumptions $({\bf H2})$ on $f_{n+i}$
\begin{eqnarray*}
&&e^{at}|\bar{Y}_{t}^{n,i}|^{2}+(1-\varepsilon-\alpha)\int_{t}^{T}
e^{a s}|\bar{Z}_{s}^{n,i}|^{2}ds \\
&\leq&
2\int_{t}^{T}e^{a s}\langle\bar{Y}_{s}^{n,i},f_{n+i}(s,Y_{s}^{n},Z_{s}^{n})
-f_{n}(s,Y_{s}^{n},Z_{s}^{n})\rangle ds \\
&&+(2\mu+\frac{1}{\varepsilon}\lambda^{2}+\lambda-a)\int_{t}^{T}e^{a s}|\bar{Y}^{n,i}_{s}|^{2}ds\\
&&+2\int_{t}^{T}e^{a s}\langle\bar{Y}_{s}^{n,i},(g(s,Y^{n+i}_s,Z^{n+i}_s)-g(s,Y^{n}_s,Z^{n}_s))dB_{s}\rangle\\
&&-2\int_{t}^{T}e^{a s}\langle\bar{Y}_{s}^{n,i},\bar{Z}_{s}^{n,i}dW_{s}\rangle,
\end{eqnarray*}
for any $a>0$ and $\varepsilon>0$.

Next, since $\|\bar{Y}^{n,i}\|_{\infty}\leq 2r$ and setting $\gamma=1-\varepsilon-\alpha>0$ and
$(2\mu+\frac{1}{\varepsilon}\lambda^{2}+\lambda-a)\leq 0$, we obtain
\begin{eqnarray*}
&&e^{at}|\bar{Y}_{t}^{n,i}|^{2}+\gamma\int_{t}^{T}
e^{a s}|\bar{Z}_{s}^{n,i}|^{2}ds \\
&\leq&
4r\int_{t}^{T}e^{a s}|f_{n+i}(s,Y_{s}^{n},Z_{s}^{n})
-f_{n}(s,Y_{s}^{n},Z_{s}^{n})|ds \\
&&+2\int_{t}^{T}e^{a s}\langle\bar{Y}_{s}^{n,i},(g(s,Y^{n+i}_s,Z^{n+i}_s)-g(s,Y^{n}_s,Z^{n}_s))dB_{s}\rangle\\
&&-2\int_{t}^{T}e^{a s}\langle\bar{Y}_{s}^{n,i},\bar{Z}_{s}^{n,i}dW_{s}\rangle.
\end{eqnarray*}
Therefore, combining rigorously Gronwall's and BDG inequality, there exists a constant $C$ depending only on $\lambda,\,\alpha$ and $T$ such that
\begin{eqnarray*}
&&\E\left[\sup_{0\leq t\leq T}|\bar{Y}_{t}^{n,i}|^{2}+\int_{0}^{T}|\bar{Z}_{s}^{n,i}|^{2}ds\right]\leq
Cr\E\left[\int_{0}^{T}|f_{n+i}(s,Y_{s}^{n},Z_{s}^{n})
-f_{n}(s,Y_{s}^{n},Z_{s}^{n})|ds\right].
\end{eqnarray*}
On the other hand, since $\|Y^{n}\|_{\infty}\leq r$,
\begin{eqnarray*}
|f_{n+i}(s,Y_{s}^{n},Z_{s}^{n})-f_{n}(s,Y_{s}^{n},Z_{s}^{n})|
&\leq& 2\lambda|Z^{n}_s|{\bf 1}_{\{|Z^{n}_s|\ >n\}}
+2\lambda|Z^{n}_s|{\bf
1}_{\{\pi_{r+1}(s)>n\}}+2\pi_{r+1}(s){\bf 1}_{\{\pi_{r+1}(s)>n\}},
\end{eqnarray*}
from which, we deduce with the help of inequality \eqref{anotherest} and assumption $({\bf H3}iv)$, that $(Y^{n},Z^{n})$ is a cauchy sequence in Banach space $\mathcal{S}^{2}(\R^d)\times\mathcal{M}^2(\R^{d\times d})$. Hence,
$(Y^{n},Z^{n})$ admit a limit $(Y,Z)\in\mathcal{S}^{2}(\R^d)\times\mathcal{M}^2(\R^{d\times d}) $, which solves BDSDE \eqref{a1}.

{\bf Step 2.}\ In the general case, let us define for each $n\in \N^{*}$,
\begin{eqnarray*}
\xi_{n}= q_{n}(\xi),\,\;\;f_{n}(t,y,z)= f\left(t,y,z\right)-f^{0}_{t}+q_{n}(f^{0}_{t}).
\end{eqnarray*}
Thanks to the Step 1, BDSDE $(\xi_{n},f_{n},g)$ has a unique solution $(Y^{n},Z^{n})\in L^{2}$, but also in $L^{p}$ far all $p\in (1,2)$ according to Lemma 3.1.
Moreover, from Lemma 3.2, for $(i,n)\in \N\times\N^{*}$, there exists $C(T,\alpha,\lambda)$ such that
\begin{eqnarray}
&&\E\left\{\sup_{0\leq t\leq T}|Y_{t}^{n+i}-Y_{t}^{n}|^{p}+
\left(\int_{0}^{T}|Z_{s}^{n+i}-Z_{s}^{n}|^{2}ds\right)^{p/2}\right\}\nonumber\\
&\leq& C\E\left\{|\xi_{n+i}-\xi_{n}|^{p}+\left(\int_{0}^{T}
|q_{n+i}(f^{0}_{s})-q_{n}(f^{0}_{s})|ds\right)^{p}\right\}.\label{last}
\end{eqnarray}

The right-hand side of \eqref{last} tends to $0$, as $n\rightarrow +\infty$, uniformly on $i$, so $(Y^{n},Z^{n})$ is again a Cauchy sequence in
$\mathcal{S}^{p}(\R^d)\times\mathcal{M}^p(\R^{d\times d})$ and its limit $(Y,Z)$ solves BDSDE $(\ref{a1})$.
\end{proof}

{\bf Acknowledgments}\\
The author thanks the anonymous referee for his careful reading to improve the manuscript

\end{document}